\newtheorem{Lemma}{Lemma}
\newtheorem{Theorem}{Theorem}
\newtheorem{Corollary}{Corollary}[section]
\newtheorem{Remark}{Remark}[section]
\newcommand{\R}{\mathbb{R}}
\renewcommand{\P}{{\rm P} }
\newcommand{\be}{\begin{equation}}
\newcommand{\ee}{\end{equation}}
\newcommand{\bea}{\begin{eqnarray}}
\newcommand{\eea}{\end{eqnarray}}
\newcommand{\beas}{\begin{eqnarray*}}
\newcommand{\eeas}{\end{eqnarray*}}
\newcommand{\ba}{\begin{array}}
\newcommand{\ea}{\end{array}}
\newcommand{\lip}[2]{\left({}#1,#2\right){}}
\newcommand{\norm}[1]{\ensuremath{\left\|{#1}\right\|}}
\newcommand{\trinorm}[1]{{\left\vert\kern-0.15ex\left\vert\kern-0.15ex\left\vert #1 
    \right\vert\kern-0.15ex\right\vert\kern-0.15ex\right\vert}}
\newcommand{\pare}[1]{\left({}#1\right)}
\newcommand{\curly}[1]{\left\{{}#1\right\}}
\newcommand{\abs}[1]{\ensuremath{\left\lvert{#1}\right\rvert}}
\newcommand{\inv}{^{-1}}
\newcommand{\chalf}{^{n+\frac{1}{2}}}
\newcommand{\divergence}{\nabla \cdot}
\newcommand{\Grad}{\ensuremath{\nabla}}
\newcommand{\Rey}{\ensuremath{\mathrm{Re}}}
\newcommand{\ohm}{ \Omega }
\newcommand{\bfe}{\ensuremath{\bm {e}}}
\newcommand{\bff}{\ensuremath{\mathbf{f}}}
\newcommand{\bfx}{\ensuremath{\mathbf{x}}}
\newcommand{\bfu}{\ensuremath{\bm {u}}}
\newcommand{\bfv}{\ensuremath{\bm {v}}}
\newcommand{\bfw}{\ensuremath{\bm {w}}}
\newcommand{\bfW}{\ensuremath{\bm {W}}}
\newcommand{\bfX}{\ensuremath{\bm {X}}}
\newcommand{\bfV}{\ensuremath{\bm {V}}}
\newcommand{\delt}{\ensuremath{\Delta t}}
\newcommand{\bfphi}{\ensuremath{\boldsymbol{\phi}}}
\newcommand{\bfeta}{\ensuremath{\boldsymbol{\eta}}}
\newcommand{\bw}{\bm w}
\newcommand{\bv}{\bm v}
\journal{ ArXiv}
\begin{document}

\begin{frontmatter}



\title{Longer time accuracy for the Ladyzhenskya model with the EMAC formulation}


\author[OUC]{Rihui Lan}
\ead{lanrihui@ouc.edu.cn}

\author[VT]{Jorge Reyes\corref{cor1}}
\ead{reyesj@vt.edu}


\cortext[cor1]{Corresponding author}


\address[OUC]{School of Mathematical Sciences, Ocean University of China, Qingdao 266100, Shandong, China}

\address[VT]{Department of Mathematics, Virginia Tech, Blacksburg,VA 24061, USA}

\begin{abstract}

In this paper, we incorporate the EMAC formulation into the Ladyzhenskaya model (LM), a large eddy simulation (LES) of incompressible flows. The EMAC formulation, which conserves energy, linear momentum, and angular momentum even with weak enforcement of incompressibility, has been shown to provide tangible benefits over the popular skew-symmetric for direct numerical simulation and regularized models of the Navier Stokes equations (NSE).
The combination of EMAC with the LM addresses the known over-dissipation issues associated with the classical Smagorinsky model (SM). We develop a finite element discretization for the EMAC-LM system and analyze its stability and derive numerical error estimates, showing improved long-time behavior compared to the standard LM approach, particularly due to EMAC’s favorable Gronwall constant independent of the Reynolds number. Benchmark simulations demonstrate that the EMAC-LM model yields more accurate flow structures, especially at high Reynolds numbers.

\end{abstract}

\begin{keyword}
Ladyzhenskaya model \sep EMAC discretization \sep Large eddy simulations \sep Finite element


\end{keyword}

\end{frontmatter}

\section{Introduction} \label{S:1}

Turbulent flows are characterized by motion at many different length scales, from large swirling structures down to tiny eddies. Capturing all of these scales requires solving the Navier–Stokes equations (NSE) on an extremely dense computational grid, thus direct numerical simulation (DNS) quickly becomes impractical outside of very limited cases. Large eddy simulation (LES) are one possible way to addresses this challenge by focusing only on the dominant, large-scale motions and approximating the influence of the smaller ones. This approach rests on the observation that larger eddies evolve in a more orderly and deterministic fashion, while smaller eddies behave in an irregular, chaotic way \cite{berselli2006mathematics}. Even so, the smaller eddies are essential because they remove energy from the large-scale structures, so their contribution must still be modeled.

In this work, we use an LES model that incorporates an artificial viscosity term to account for the dissipative influence of unresolved small-scale fluctuations. The model we study is Ladyzhenskya Model, which can be viewed as a generalization of the classical Smagorinky model. This distinction matters because, while the Smagorinsky model is widely used in industry, it is known to be overly dissipative \citep{ervin2007numerical,iliescu2003numerical,zang1993dynamic}, motivating the development and analysis of improved variants.

The Ladyzhenskaya model and the Smagorinsky model have been the subject of extensive theoretical and numerical study \citep{sagaut2006large,iliescu2002convergence,john2003large}. The well-posedness of these models was first explored by Ladyzhenskaya \citep{ladyzhenskaya1967new,ladyzhenskaya1968modifications}, and further developed by Du and Gunzburger in \citep{du1990finite,du1991analysis}. More recently, a ``corrected" Smagorinsky variant was proposed in \cite{siddiqua2022numerical}, aiming to better capture energy transfer from unresolved to resolved scales while preserving well-posedness. A particular case of the Ladyzhenskaya model, called the generalized Smagorinsky model, was studied in \cite{ReyesGSM}. In \cite{cao2022continuous}, 3D continuous data assimilation was studied using the Ladyzhenskaya model, while recently a reduced order Ladyzhenskaya model was introduced in \cite{reyes2025verifiability}.

The Ladyzhenskya model (LM) equations on domain $\Omega \subset \R^d\; (d=2,3)$ with boundary $\Gamma$ and time interval $[0,T]$ are
\begin{equation}\label{LM}
\begin{aligned}
    \bfw_t - \nabla \cdot ( \Rey^{-1} + { (C_S \delta)^r \| \nabla\bfw\|_F^s})\nabla\bfw  + \nabla p +  (\bfw \cdot \nabla)\bfw  = \bff,  \textrm{ in } (0,T] \times \Omega,  \\
    \nabla\cdot \bfw = 0, \textrm{ in } [0,T] \times \Omega, 
    \end{aligned}
\end{equation}
accompanied by appropriate initial condition and boundary condition. Herein, $\bfw(\bfx,t)$ and $p(\bfx,t)$ denote the fluid's velocity and pressure at spatial coordinate $ \bfx \in \ohm$ and time $ t\in [0,T]$. $\Rey$ is the Reynolds number, $\bff$ is the forcing term, and $C_S$ is the Smagorinsky constant. The spatial filter radius is denoted by $\delta$, and the Frobenius norm by $ \norm{ \mathbf{A} }_F =  \pare{ \sum_{i,j = 1}^d a_{ij}^2 }^{\tfrac{1}{2}} $.
It is clear that when $r=2$ and $s=1$ return to the classical Smagorinsky model. 
As for the well-posedness of \eqref{LM}, Ladyzhenskaya \citep{ladyzhenskaya1967new,ladyzhenskaya1968modifications} first showed that the model has a globally unique solution in time for any Reynolds number and any $ s \geq \frac{1}{2}$.
This was then improved to $ s \geq \frac15 $ in \cite{du1991analysis} where they also showed that weak solutions to the stationary case are unique under certain conditions on the Reynolds number.
The accuracy of \eqref{LM} approximating to NSE was studied by Du and Gunzburger \cite{du1991analysis}.
They showed in Theorem 4.3 that as $\nu_1 = (C_S\delta)^r \to 0$ 
the solution of the LM model weakly converges to the NSE solution. This observation is particularly relevant in our setting, since the constant $(C_S \delta)^r \to 0$ as $\delta \to 0$, with $\delta = O(h)$, where $h$ denotes the mesh size of the finite element triangulation.

Recalling that the NSE are derived from conservation of mass and linear momentum, they are also known to conserve several other physical quantities such as energy, angular momentum, enstrophy (in 2D), and helicity (in 3D) \cite{GS98}. These conserved quantities are believed to play a fundamental role in the evolution of flow structures, the energy cascade, and dissipation at the microscale \cite{GS98,Rebholz07}. However, standard Galerkin discretizations often fail to preserve these properties, due to how the nonlinearity is handled \cite{CHOR17,CHOR19}.Various nonlinear formulations—convective, skew-symmetric, rotational, and conservative—have been proposed to address this issue, with the skew-symmetric and rotational forms being commonly used to improve energy behavior. One solutions is to use strongly divergence-free discretizations like Scott–Vogelius elements, though they often require mesh restrictions and high-degree polynomials, especially on quadrilateral meshes. More recently, Charnyi et al.\ introduced the EMAC formulation (short for Energy, Momentum, and Angular Momentum Conservation), which provides a new structure for the nonlinear term seen in \eqref{eq: emacform} and better preserves these invariants across a variety of time-stepping schemes \cite{CHOR17}. 
\begin{equation}\label{eq: emacform}
    \bfu \cdot \Grad \bfu + \Grad p = 2 D(\bfu)\bfu + (\divergence \bfu)\bfu + \Grad P,
\end{equation}
where $ P = p - \frac{1}{2}|\bfu|^2$, and $ D(u) = \frac{\Grad \bfu + (\Grad \bfu)^T}{2}$.
Studies have shown that the EMAC formulations offer better long-time behavior compared to traditional skew-symmetric formulations \cite{belding2022efficient,olshanskii2020longer,LeoEMAC,vonwahl2025benchmarkstresstestsflow,lan2025robust}. Motivated by these results in high Reynolds number simulations, we apply the EMAC formulation to the Ladyzhenskaya LES model and compare its performance with the skew-symmetric scheme. Our goal is to assess whether EMAC’s conservation properties can improve stability and accuracy for LES models on coarser meshes.

The rest of this article will be organized as follows. Section \ref{Prelim_Not} we give necessary background on notation and preliminaries.
Section \ref{schemeStab} gives the finite element scheme and its unconditional stability. In Section
\ref{erroranalysis} the finite element convergence error analysis is presented. Then, in Section \ref{sec-numres}, we present numerical experiments that support the theoretical results. These include a study of convergence rates and of two-dimensional flow past a step. We conclude our work in the last Section \ref{sec-Con}.

\section{Notations and Preliminaries} \label{Prelim_Not}

This paper will use the following spaces: $L^p(\Omega) $, $W^{k,p}(\Omega) $, and $ H^k(\Omega) = W^{k,2}(\Omega)$, where $ k \in \mathbb{N}, 1 \leq p < \infty $ for domain $\Omega \subset\mathbb{R}^d$ $(d=2,3)$. The $L^2(\Omega)$ norm is denoted as $\Vert \cdot \Vert$ with the corresponding inner product $(\cdot,\cdot)$. The $L^p(\Omega)$ norm is denoted by $\| \cdot \|_{p} $ while the Sobolev $ W^{k,p}(\Omega)$ norm is $ \Vert \cdot \Vert_{k,p} $. The solutions are sought in the following functional spaces:
\begin{eqnarray*}
    & &\mathrm{Velocity\;space} -\bfX_s:= W_0^{1,s+2}(\Omega) = \curly{ \bfu \in W^{1,s+2}(\ohm): \bfu \mid_{\partial\ohm} =0 \;\text{in}\; L^2(\ohm)}\, , \\
    & &\mathrm{Pressure\;space} - Q:=L^{2}_{0}(\Omega)=\left\{q \in L^2(\Omega) :
     \int_\Omega \,  q \, d\Omega = 0 \right\} \, , \\
    & &\mathrm{Divergence-free\;space} - \bfV_s:=\left\{\bfv \in \bfX_s: \int_\Omega q \, \Grad\cdot
    \bfv  \, d\Omega \, = \, 0, \; \forall  q \in Q \right\},
\end{eqnarray*}
where $s$ is one of the model's parameters as defined above in \eqref{LM}.
$X_s'$ is the dual space of $X_s$ and the norm of the space is $\| \cdot \|_{-1} $. 
Moreover we set $ \bfX = W^{1,2}_0 (\ohm)$ and $ \bfV$ to be the weakly divergence-free subspace of $\bfX$.

Let $\Omega $ be a polygonal domain and following the work from \cite{ReyesGSM} we consider $ \bfX_h \subset \bfX_s$ and $ Q_h \subset Q$, with
\beas
    \bfV_{h} &:=& \left\{ \bfu \in \bfX_{h} : (q , \Grad \cdot \bfu) = 0 ,
    \; \forall q \in Q_{h} \right\}.
\eeas
Discrete inf-sup condition \cite{gunzburger1989finite} gives that $\exists \; \gamma \in \R^+$, such that
\begin{equation}\label{dinfsup}
   \gamma  \le \inf_{q_{h} \in Q_{h}} \sup_{\bfv_h \in \bfX_{h}}
   \frac{ (q_{h} \, , \Grad \cdot \bfv_h ) }{\|q_{h}\| \, \| \nabla \bfv_h \|} \ .
\end{equation}
Taylor-Hood and mini element are examples of such spaces \citep{BrennerScott,gunzburger1989finite}.

Additionally we use the following notation  $ \bfu (t\chalf) = \bfu \left( \dfrac{t^{n+1} + t^n}{2} \right) $, and $ \bfu\chalf = \frac{1}{2}\pare{\bfu^{n+1} + \bfu^n}$ for both the discrete and continuous variables.
Furthermore, let $\delt$ denote the time step, then $t^{n}
= n \delt$, $n = 0, 1, \dots, M$, and final time is $T := M \delt$. Furthermore,
\begin{eqnarray*}
 \trinorm{ \bfu }_{\infty, k} &:=& \max_{0 \le n \le M} \|\bfu^{n}\|_{k,2} \, ,
    \hspace{0.5in}  \trinorm{ \bfu_{1/2} }_{\infty, k} \ := \ \max_{1 \le n \le M} \|\bfu\chalf\|_{k,2} \, ,\\
    \trinorm{ \bfu }_{m,k} &:=& \left(\sum_{n=0}^{M} \| \bfu^{n} \|^{m}_{k,2}\delt\right)^{1/m}, \, 
    \trinorm{ \bfu_{1/2} }_{m,k} \ := \ \left(\sum_{n=1}^{M} \| \bfu\chalf \|^{m}_{k,2}\delt\right)^{1/m}\, .
\end{eqnarray*}
For $ \bfu,\bfv,\bfw \in \bfX_s $, define the trilinear forms $ b, b^*,c: \bfX_s \times \bfX_s \times \bfX_s \mapsto \mathbb{R}  $ by
\begin{eqnarray*}
 b(\bfu,\bfv,\bfw) &=& (\bfu\cdot\nabla \bfv,\bfw),\\
 b^{*}(\bfu,\bfv,\bfw)&=&  \frac{1}{2}(b(\bfu,\bfv,\bfw)-b(\bfu,\bfw,\bfv)),\\
 c(\bfu,\bfv,\bfw) &=& 2\pare{D(\bfu)\bfv,\bfw} + \pare{(\divergence\bfu)\bfv,\bfw},
 \end{eqnarray*}
where $ D(\bfu)$ is defined as the symmetric part of $ \Grad \bfu$ by $ D(\bfu) = \frac{\Grad \bfu + (\Grad \bfu)^T}{2}$. These trilinear forms will be used to represent the non-linear term in the LM model for both the SKEW and EMAC scheme.

We assume the following approximation properties, \citep{BrennerScott}:
\beas
    \inf_{\bfv\in \bfX_{h}}\| \bfu - \bfv \| &\le& C h^{k+1} \| \bfu \|_{{k+1},2},\;\; \bfu \in
    H^{k+1}(\Omega)^{d}, \nonumber \\
    \inf_{\bfv\in \bfX_{h}}\|\Grad( \bfu - \bfv) \| &\le& C h^{k} \| \bfu \|_{{k+1},2},\;\; \bfu \in
    H^{k+1}(\Omega)^{d}, \nonumber \\
    \inf_{q \in Q_{h}} \| p - q \| &\le& Ch^{s+1} \| p \|_{s+1,2},\;\;  p \in
    H^{s+1}(\Omega). \nonumber \label{PROP}
\eeas

The Stokes projection ${\rm P}_{st}:\bm V\rightarrow \bm V_h$ \cite{girault2003quasi,GiraultNochettoScott}  is defined by
\begin{equation}\label{stpr}
\left(\nabla {\rm P}_{st}\bw,  \nabla\bv_h\right) = \left( \nabla\bw, \nabla\bv_h\right), \quad \forall\, \bw\in \bm V,\;\bv_h \in \bm V_h.
\end{equation}
Regarding the Stokes projection \eqref{stpr}, a stability lemma was proved in \cite{GiraultNochettoScott}, and an error estimate was shown in \cite{de2018analysis} in conjunction with the Aubin-Nitsche lift technique \cite{BakerDougalisKarakashian}. 
\begin{Lemma} [Stability of Stokes Projection \cite{GiraultNochettoScott}]
	\label{stokes_stability}
	For any  fixed $r \in [2,\infty]$,
	there exists a constant $C >0$ depending only on $\Omega$  such that
	\begin{equation*}\label{333:stab}
	\Vert \nabla {\rm P}_{st} \bw\Vert_{r} \leq C \Vert\nabla \bw\Vert_{r}, \quad \forall\,\bw \in \bm V.
	\end{equation*}
\end{Lemma}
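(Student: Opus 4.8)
The plan is to prove the two endpoint estimates, at $r=2$ and $r=\infty$, and then recover the full range $r\in(2,\infty)$ by interpolation. The content of the lemma is almost entirely concentrated in the $r=\infty$ endpoint; the rest is soft.

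The case $r=2$ is immediate from the defining relation \eqref{stpr}. Taking the admissible test function $\bv_h = {\rm P}_{st}\bw$ gives $\|\nabla {\rm P}_{st}\bw\|^2 = (\nabla\bw,\nabla {\rm P}_{st}\bw) \le \|\nabla\bw\|\,\|\nabla {\rm P}_{st}\bw\|$ by Cauchy--Schwarz, whence $\|\nabla {\rm P}_{st}\bw\| \le \|\nabla\bw\|$ with $C=1$. Equivalently, ${\rm P}_{st}$ is the orthogonal projection onto $\bm V_h$ with respect to the $H^1_0$-energy inner product, so it is automatically a contraction in the $L^2$-gradient norm.

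The heart of the argument is the maximum-norm endpoint $r=\infty$, which is the main theorem of \cite{GiraultNochettoScott} and where all the difficulty lies. I would represent a single scalar component of $\nabla {\rm P}_{st}\bw$ at a fixed point $x_0$ as a duality pairing against a regularized derivative of a Dirac mass, i.e.\ against the discrete Green's matrix of the Stokes problem, and then control that Green's matrix uniformly in $h$. The standard machinery proceeds through a weighted-norm estimate: one introduces the regularized distance weight $\sigma(x) = (|x-x_0|^2 + (\kappa h)^2)^{1/2}$ and a dyadic decomposition of $\Omega$ into annuli $\{\,2^{-j-1}d \le |x-x_0| \le 2^{-j}d\,\}$, and on each annulus uses a local energy (Caccioppoli-type) estimate together with inverse inequalities to transfer control from one scale to the next. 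Summing over the annular scales, and invoking the $W^{2,r}/W^{1,r}$ regularity of the continuous Stokes problem on the polygonal domain $\Omega$ to anchor the weighted bounds, yields $\|\nabla {\rm P}_{st}\bw\|_\infty \le C\|\nabla\bw\|_\infty$ with $C$ depending only on $\Omega$ and the shape-regularity constants. This is precisely the step where quasi-uniformity of the mesh and the discrete inf-sup condition \eqref{dinfsup} are essential, and where the delicate, uniform-in-$h$ control of the discrete Stokes Green's function constitutes the main obstacle.

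With the contraction at $r=2$ and the boundedness at $r=\infty$ established, the intermediate cases follow by a standard interpolation argument: viewing $\bw \mapsto \nabla {\rm P}_{st}\bw$ as a linear operator on the space of admissible gradient fields (well defined since $\nabla$ is injective on $\bm V$ by Poincar\'e), the Riesz--Thorin theorem gives $L^r\to L^r$ boundedness for every $r\in(2,\infty)$, with interpolation constant bounded by the product of the two endpoint constants and hence still depending only on $\Omega$. This completes the chain and establishes the stated estimate on the full interval $r\in[2,\infty]$.
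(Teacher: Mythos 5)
The paper does not actually prove this lemma; it is quoted verbatim from \cite{GiraultNochettoScott}, so the only fair comparison is against the argument in that reference, whose architecture your proposal correctly reproduces: the $r=2$ endpoint, the $W^{1,\infty}$ endpoint, and interpolation in between. Your $r=2$ case is complete and correct --- testing \eqref{stpr} with $\bv_h={\rm P}_{st}\bw$ exhibits ${\rm P}_{st}$ as the energy-orthogonal projection onto $\bm V_h$, hence a contraction with $C=1$. Your description of the $r=\infty$ endpoint (regularized discrete Green's matrix, weight $\sigma$, dyadic annuli, local energy plus inverse estimates, quasi-uniformity and the inf-sup condition) is an accurate summary of the Girault--Nochetto--Scott machinery, but it is a summary, not a proof: none of the weighted estimates are actually carried out, and this endpoint is the entire content of the cited paper. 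As written, that paragraph is a citation in disguise, which is acceptable only because the paper itself treats the lemma the same way.

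The one step you present as ``soft'' but which contains a genuine gap is the interpolation. Riesz--Thorin applies to a linear operator defined on (a dense subspace of) $L^{2}+L^{\infty}$ over the full measure space; your operator $\nabla\bw\mapsto\nabla{\rm P}_{st}\bw$ is defined only on the subspace of gradient fields of $\bm V$-functions, and interpolation does not commute with passing to subspaces unless those subspaces are complemented compatibly at both endpoints. The natural projection onto gradient fields (a Helmholtz-type decomposition) is bounded on $L^{r}$ for $1<r<\infty$ but fails at $r=\infty$, which is exactly the endpoint you need. The standard repair is either to interpolate the Sobolev couple $\bigl(W^{1,2}_0,\,W^{1,\infty}_0\bigr)$ directly --- which requires the nontrivial identification of the intermediate space with $W^{1,r}_0$ on the given domain --- or to obtain the finite $r$ range by a duality argument using $W^{1,r'}$ regularity of the continuous Stokes problem, as is done in the literature. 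Either way, the claim that ``the rest is soft'' understates what remains; you should at minimum name which interpolation theorem for the couple of Sobolev spaces you are invoking and why it applies on a polygonal $\Omega$.
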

\begin{Lemma}[Error estimate of Stokes Projection \cite{de2018analysis} ]
	\label{stokes_error_est}
	There exists a constant $C >0$ depending only on $\Omega$  such that the following estimate holds:
	\begin{equation*}\label{444}
	\Vert\bw - {\rm P}_{st} \bw\Vert + h \Vert\bw - {\rm P}_{st} \bw\Vert_{1,2} \leq C h^3 \Vert \bw \Vert_{3,2},  \quad\forall\,\bw\in \bm V \cap (H^3(\Omega))^d.
	\end{equation*}
\end{Lemma}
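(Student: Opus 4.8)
The plan is to establish the two bounds in sequence: first the energy ($H^1$) estimate by best approximation in the discretely divergence-free space, and then bootstrap to the $L^2$ estimate via an Aubin--Nitsche duality argument, which is precisely where the extra power of $h$ is recovered. Throughout, write $\bfe := \bw - {\rm P}_{st}\bw$.

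By the defining relation \eqref{stpr}, the error satisfies the Galerkin orthogonality $(\nabla\bfe, \nabla\bv_h) = 0$ for every $\bv_h \in \bm V_h$, so ${\rm P}_{st}\bw$ is the $H^1$-seminorm best approximation of $\bw$ in $\bm V_h$, giving $\|\nabla\bfe\| = \inf_{\bv_h \in \bm V_h}\|\nabla(\bw - \bv_h)\|$. The only subtlety is that the stated approximation property lives in the unconstrained space $\bm X_h$, whereas the infimum runs over the constrained space $\bm V_h$. To bridge this I would invoke the discrete inf--sup condition \eqref{dinfsup}: starting from the $\bm X_h$-best approximant $\bz_h$ of $\bw$, its discrete divergence is itself of order $h^2$ because $\bw \in \bm V \cap (H^3(\Omega))^d$ has zero trace and is therefore (weakly, hence pointwise) divergence-free; then \eqref{dinfsup} furnishes a correction in $\bm X_h$ of comparable $H^1$ size that renders the sum discretely divergence-free, placing an admissible competitor in $\bm V_h$ at the cost of a multiplicative constant depending only on $\gamma$. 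Combined with the approximation property at $k = 2$ this yields $\|\nabla\bfe\| \le C h^2 \|\bw\|_{3,2}$, which already controls the second term, $h\,\|\bfe\|_{1,2} \le C h^3 \|\bw\|_{3,2}$.

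For the $L^2$ estimate I would use duality. Let $(\bfphi, \lambda)$ solve the dual Stokes problem $-\Delta\bfphi + \nabla\lambda = \bfe$, $\nabla\cdot\bfphi = 0$ in $\Omega$ with $\bfphi|_{\partial\Omega} = 0$; standard $H^2$ Stokes regularity on $\Omega$ (e.g.\ for a convex polygon) gives $\|\bfphi\|_{2,2} + \|\lambda\|_{1,2} \le C\|\bfe\|$. Testing against $\bfe$ and integrating by parts produces $\|\bfe\|^2 = (\nabla\bfe, \nabla\bfphi) - (\nabla\cdot\bfe, \lambda)$. In the first term I subtract any $\bfphi_h \in \bm V_h$ using Galerkin orthogonality and bound by $\|\nabla\bfe\|\,\|\nabla(\bfphi - \bfphi_h)\| \le C h\,\|\nabla\bfe\|\,\|\bfphi\|_{2,2}$. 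For the second term, since $Q_h \subset Q$ and $\bw$ is exactly divergence-free so that $(\nabla\cdot\bfe, q_h) = 0$ for all $q_h \in Q_h$, I subtract $\lambda_h \in Q_h$ and bound by $\|\nabla\bfe\|\,\|\lambda - \lambda_h\| \le C h\,\|\nabla\bfe\|\,\|\lambda\|_{1,2}$. Inserting the Step-one bound $\|\nabla\bfe\| \le C h^2\|\bw\|_{3,2}$ together with the regularity estimate for $(\bfphi, \lambda)$ gives $\|\bfe\|^2 \le C h^3 \|\bw\|_{3,2}\,\|\bfe\|$, hence $\|\bfe\| \le C h^3\|\bw\|_{3,2}$; adding the two bounds completes the proof.

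I expect the main obstacle to be the constrained best-approximation step in the energy estimate: the passage from $\inf_{\bm V_h}$ to $\inf_{\bm X_h}$ is not automatic and genuinely requires the discrete inf--sup condition, and it is here that the constant acquires its dependence on $\gamma$ and on $\Omega$. The corresponding delicacy in the duality step is that $\bfe$ is only \emph{discretely} divergence-free, so the divergence term does not vanish identically and must instead be absorbed by testing against $Q_h$ and exploiting the exact divergence-freeness of $\bw$; obtaining the full $h^3$ rather than a suboptimal power hinges on this cancellation together with $H^2$ regularity of the dual problem.
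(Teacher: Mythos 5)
Your argument is correct and matches the route the paper itself points to: the lemma is quoted from the literature, and the paper explicitly attributes it to an Aubin--Nitsche lift argument, which is exactly your Step-two duality combined with the standard inf-sup passage from best approximation in $\bm X_h$ to best approximation in $\bm V_h$. The only caveat worth recording is that the $H^2\times H^1$ regularity of the dual Stokes problem (hence convexity of the polygonal domain, plus dependence of $C$ on the inf-sup constant $\gamma$ and mesh regularity) is implicitly assumed in the lemma's ``depending only on $\Omega$'' phrasing.
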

Let us present some popular inverse inequalities as follows \cite{BrennerScott}:
\begin{Lemma}[Inverse inequalities \cite{BrennerScott}]\label{lem: Inverseineq}
	For any fixed $0 \leq n \leq m \leq 1, 1 \leq q \leq p \leq \infty$, there exists a constant $C>0$, depending only on $\Omega$,  such that 
	\begin{equation*}\label{555:inv}
	\left\|\bw_h\right\|_{m, p} \leq C  h^{n-m-d\left(\frac{1}{q}-\frac{1}{p}\right)}\left\|\bw_h\right\|_{n, q}, \quad \forall\, \bw_h \in \bm X_h.
	\end{equation*}
\end{Lemma}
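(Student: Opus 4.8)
The statement is a classical inverse estimate, and the plan is to prove it by the standard three ingredients: equivalence of norms on a fixed finite-dimensional polynomial space on a reference cell, an affine change of variables with careful bookkeeping of the scaling constants, and the quasi-uniformity (and shape-regularity) of the underlying triangulation, which is what lets the final constant depend only on $\Omega$. I would first establish the \emph{local} estimate on a single mesh cell $K$, and then assemble the cells into the global bound.

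For the local step, fix the reference element $\hat K$ and let $\hat P$ denote the finite-dimensional space of polynomials to which elements of $\bm X_h$ pull back under the affine map $F_K(\hat x)=B_K\hat x+b_K$ with $F_K(\hat K)=K$. On $\hat P$ every pair of Sobolev norms is comparable, because $\hat P$ is finite dimensional and all norms on a finite-dimensional space are equivalent; hence there is a constant $C(\hat K,m,n,p,q)$ with $\|\hat v\|_{m,p,\hat K}\le C\,\|\hat v\|_{n,q,\hat K}$ for all $\hat v\in\hat P$. This is precisely where the hypotheses ``$m,n,p,q$ fixed'' and the restriction to the discrete space $\bm X_h$ are essential: without finite dimensionality no such comparison between a stronger and a weaker norm can hold.

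Next I would transfer this to $K$ using the usual scaling lemmas. Writing $v=\hat v\circ F_K^{-1}$, the chain rule together with the substitution $x=F_K(\hat x)$ gives, for each admissible order,
\[
\|v\|_{m,p,K}\le C\,\|B_K^{-1}\|^{m}\,\abs{\det B_K}^{1/p}\,\|\hat v\|_{m,p,\hat K},\qquad
\|\hat v\|_{n,q,\hat K}\le C\,\|B_K\|^{n}\,\abs{\det B_K}^{-1/q}\,\|v\|_{n,q,K}.
\]
Chaining these with the reference-cell comparison and invoking shape-regularity, which yields $\|B_K\|\lesssim h_K$, $\|B_K^{-1}\|\lesssim h_K^{-1}$ and $\abs{\det B_K}\simeq h_K^{d}$, produces the local inverse estimate $\|v\|_{m,p,K}\le C\,h_K^{\,n-m-d(1/q-1/p)}\|v\|_{n,q,K}$, with the exponent exactly as claimed.

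Finally I would globalize. By quasi-uniformity $h_K\simeq h$ uniformly in $K$, so the local estimate holds with $h$ in place of $h_K$. Raising to the power $p$ and summing over $K$ gives $\|\bw_h\|_{m,p}^p\le C\,h^{p(n-m-d(1/q-1/p))}\sum_K\|\bw_h\|_{n,q,K}^{p}$, and since $p/q\ge1$ the elementary superadditivity inequality $\sum_K a_K^{p/q}\le(\sum_K a_K)^{p/q}$ applied to $a_K=\|\bw_h\|_{n,q,K}^{q}$ collapses the right-hand side to $\|\bw_h\|_{n,q}^{p}$; taking $p$-th roots then finishes the case $p<\infty$, while the endpoint $p=\infty$ is handled identically with a maximum replacing the sum. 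The one place requiring genuine care—and the step I expect to be the main obstacle—is the scaling lemma when $m$ or $n$ is a non-integer in $(0,1)$: there the Sobolev--Slobodeckij (Gagliardo) seminorm must be scaled directly, or the estimate obtained by interpolation between the integer-order cases in $\{0,1\}$, since a naive chain-rule argument only covers integer orders.
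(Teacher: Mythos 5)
The paper does not prove this lemma at all --- it is quoted directly from the cited reference \cite{BrennerScott} --- and your argument is precisely the standard proof from that source: norm equivalence on the finite-dimensional reference space, affine scaling with $\|B_K\|\lesssim h_K$, $\|B_K^{-1}\|\lesssim h_K^{-1}$, $\abs{\det B_K}\simeq h_K^d$ under shape regularity, and quasi-uniformity plus the $\ell^{q}\hookrightarrow\ell^{p}$ summation step to globalize. The argument is essentially correct (including your accurate flagging of the fractional-order case); the only bookkeeping point to tidy is that the displayed scaling inequalities hold seminorm-by-seminorm rather than for the full $W^{n,q}$ norm when $n\geq 1$, so the chaining should be carried out order by order on the quotient spaces, exactly as in the cited textbook proof.
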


We also use the following lemmas.

\begin{Lemma}\label{C_TRIL} \citep{REYESExamples,LeoEMAC}
For $\bfu, \bfv, \bfw \in \bfX_s$ the trilinear term $c(\bfu, \bfv, \bfw)$ can be bounded by
\beas
    c(\bfu,\bfv,\bfw) &\leq&  C(\ohm) \norm{\Grad \bfu }\norm{\Grad \bfv} \norm{\Grad \bfw}, \label{gradbound}\\
    c(\bfu,\bfv,\bfw) &\leq& C(\ohm) \| \nabla \bfu \| \| \nabla \bfv \| \| \bfw \|^{1/2} \| \nabla \bfw \|^{1/2}, \label{cbound}\\
    c(\bfu,\bfv,\bfw) &\leq& C(\ohm) \| \Grad \bfu \| \|  \bfv \|^{1/2}  \| \nabla \bfv \|^{1/2} \| \nabla \bfw \|  . \label{cbound2}
\eeas
\end{Lemma}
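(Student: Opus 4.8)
The plan is to reduce all three estimates to a single scalar integral and then apply Hölder's inequality with dimension-appropriate exponents, followed by Sobolev embeddings, the Poincaré inequality, and an interpolation (Gagliardo--Nirenberg / Ladyzhenskaya) inequality. First I would expand the trilinear form componentwise: write $2(D(\bfu)\bfv,\bfw) = \sum_{i,j}\int_\ohm (\partial_j u_i + \partial_i u_j)\, v_j\, w_i \, dx$ and $((\divergence\bfu)\bfv,\bfw) = \int_\ohm (\divergence\bfu)(\bfv\cdot\bfw)\, dx$. Since every resulting term carries exactly one first derivative of $\bfu$ multiplied by an undifferentiated $\bfv$ and $\bfw$, it suffices to bound the pointwise majorant $\int_\ohm |\Grad\bfu|\,|\bfv|\,|\bfw| \, dx$, absorbing the finitely many combinatorial constants into $C(\ohm)$.

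For the first estimate I would apply Hölder with the conjugate triple $(2,4,4)$ to obtain $\norm{\Grad\bfu}\,\norm{\bfv}_4\,\norm{\bfw}_4$, and then invoke the Sobolev embedding $H^1(\ohm)\hookrightarrow L^4(\ohm)$, valid for $d=2,3$, together with the Poincaré inequality on $\bfX_s\subset H^1_0(\ohm)$ to replace each $L^4$ factor by $C(\ohm)\norm{\Grad\cdot}$. This produces the gradient-only bound.

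The remaining two estimates follow the same template but with an asymmetric Hölder split, so that exactly one factor acquires the interpolated norm $\norm{\cdot}^{1/2}\norm{\Grad\cdot}^{1/2}$. For the second bound I would split with exponents $(2,6,3)$, giving $\norm{\Grad\bfu}\,\norm{\bfv}_6\,\norm{\bfw}_3$; then $H^1\hookrightarrow L^6$ with Poincaré controls $\norm{\bfv}_6\le C(\ohm)\norm{\Grad\bfv}$, while the interpolation inequality $\norm{\bfw}_3\le C(\ohm)\norm{\bfw}^{1/2}\norm{\Grad\bfw}^{1/2}$ in $d=3$ (and the analogous $L^4$ version in $d=2$) handles $\bfw$. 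The third bound is obtained by swapping the roles of $\bfv$ and $\bfw$, i.e. splitting with $(2,3,6)$ so that the interpolation falls on $\bfv$ and the Sobolev embedding on $\bfw$.

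The only delicate point is the dimension dependence of the interpolation exponents: the clean $\tfrac12,\tfrac12$ powers appearing in the last two estimates are precisely the Gagliardo--Nirenberg exponents for $L^3$ in three dimensions and for $L^4$ in two dimensions, so the choice of conjugate triples $(2,6,3)$ and $(2,3,6)$ is exactly what makes the borderline embedding $H^1\hookrightarrow L^6$ in $d=3$ generate these powers. Once the correct exponents are fixed, each estimate is a one-line chain of Hölder, Sobolev, Poincaré, and interpolation; no cancellation or integration by parts is required, so I do not anticipate a substantive obstacle beyond bookkeeping the $d=2$ versus $d=3$ cases.
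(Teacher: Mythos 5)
Your argument is correct and is essentially the standard one: the paper itself states this lemma without proof, citing \citep{REYESExamples,LeoEMAC}, and those references prove it exactly as you do, by majorizing $c(\bfu,\bfv,\bfw)$ pointwise by $\int_\Omega |\Grad\bfu|\,|\bfv|\,|\bfw|\,dx$ (possible because the derivative in both terms of $c$ falls only on $\bfu$, so no integration by parts is needed) and then applying H\"older with the splits $(2,4,4)$, $(2,6,3)$, $(2,3,6)$ together with the Sobolev embedding, Poincar\'e, and the Ladyzhenskaya/Gagliardo--Nirenberg interpolation. Your handling of the $d=2$ versus $d=3$ exponents is also the right bookkeeping, so there is nothing to add.
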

Also from \cite{REYESExamples} we will utilize the following identity 
\begin{equation}
    c(\bfu,\bfu,\bfv) = c(\bfu,\bfv,\bfu) = -\frac{1}{2} c(\bfv,\bfu,\bfu). \label{eq: emac_id}
\end{equation}

\begin{Lemma} The trilinear term $c(\bfu, \bfv, \bfw)$ can be bounded as:\label{C infty bound}

\noindent For $ \bfv,\bfw\in \bfX_s$ and $ \Grad \bfu \in L^\infty(\ohm) $
\begin{equation}\label{eq: C infbound 1}
    c\pare{\bfu,\bfv,\bfw} \leq C(\ohm) \norm{\Grad \bfu}_{\infty} \norm{\bfv} \norm{\bfw}
\end{equation}
For $ \bfu,\bfw\in \bfX_s$ and $ \bfv \in L^\infty(\ohm) $
\begin{equation}\label{eq: C infbound 2}
    c(\bfu,\bfv,\bfw) \leq  C(\ohm) \norm{\Grad \bfu} \norm{\bfv}_{\infty} \norm{\bfw}
\end{equation}
For $ \bfu,\bfv\in \bfX_s$ and $ \bfw \in L^\infty(\ohm) $
\begin{equation}\label{eq: C infbound 3}
    c(\bfu,\bfv,\bfw) \leq C(\ohm) \norm{\Grad \bfu} \norm{\bfv} \norm{\bfw}_{\infty}
\end{equation}

\begin{proof}
The proof relies on an application of H\"olders inequality followed by using the appropriate Sobolev inequalities
    \beas
         c(\bfu,\bfv,\bfw) &=& 2\pare{D(\bfu)\bfv,\bfw} + \pare{(\divergence\bfu)\bfv,\bfw} \\
         &\leq & C\norm{D(\bfu)}_{\infty} \norm{\bfv} \norm{\bfw} +  C\norm{\divergence\bfu}_{\infty} \norm{\bfv} \norm{\bfw}\\
          &\leq & C\norm{\Grad \bfu}_{\infty} \norm{\bfv} \norm{\bfw}.
    \eeas
Which proves \eqref{eq: C infbound 1}, the proofs of \eqref{eq: C infbound 2} and \eqref{eq: C infbound 3} follow similar arguments.
\end{proof}
\end{Lemma}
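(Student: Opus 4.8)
The plan is to handle all three inequalities by a single mechanism. First I would expand the trilinear form into its two defining pieces, $c(\bfu,\bfv,\bfw) = 2\pare{D(\bfu)\bfv,\bfw} + \pare{(\divergence\bfu)\bfv,\bfw}$, and then apply a three-factor H\"older inequality in which exactly one of the three factors carries an $L^\infty(\ohm)$ norm and the other two carry $L^2(\ohm)$ norms. Which factor is placed in $L^\infty$ is dictated by the hypothesis of each case: in \eqref{eq: C infbound 1} it is the coefficient data $D(\bfu)$ and $\divergence\bfu$ (both built from $\Grad\bfu$), in \eqref{eq: C infbound 2} it is $\bfv$, and in \eqref{eq: C infbound 3} it is $\bfw$.

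The only structural input beyond H\"older is that the symmetric gradient and the divergence are each controlled pointwise by the full gradient, so that $\norm{D(\bfu)}_{p} \leq C\norm{\Grad\bfu}_{p}$ and $\norm{\divergence\bfu}_{p} \leq C(d)\norm{\Grad\bfu}_{p}$ for $p\in\{2,\infty\}$; the first holds because the Frobenius norm of the symmetric part never exceeds that of the matrix itself, and the second from $\abs{\divergence\bfu} = \abs{\sum_i \partial_i u_i} \leq \sqrt{d}\,\abs{\Grad\bfu}$. For \eqref{eq: C infbound 1} I would then bound $2\pare{D(\bfu)\bfv,\bfw} \leq 2\norm{D(\bfu)}_{\infty}\norm{\bfv}\norm{\bfw}$ and $\pare{(\divergence\bfu)\bfv,\bfw}\leq \norm{\divergence\bfu}_{\infty}\norm{\bfv}\norm{\bfw}$, collapse both coefficient norms into $\norm{\Grad\bfu}_{\infty}$, and absorb dimensional and domain constants into $C(\ohm)$. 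For \eqref{eq: C infbound 2} the $L^\infty$ norm is moved onto $\bfv$ while $D(\bfu)$, $\divergence\bfu$, and $\bfw$ remain in $L^2$, producing $\norm{\Grad\bfu}\norm{\bfv}_{\infty}\norm{\bfw}$; the estimate \eqref{eq: C infbound 3} is identical with the roles of $\bfv$ and $\bfw$ interchanged. Since $\ohm$ is bounded and $\bfv,\bfw \in \bfX_s \hookrightarrow L^2(\ohm)$, every factor appearing on the right-hand sides is finite, so each H\"older step is legitimate.

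I do not expect a genuine obstacle here: the statement is essentially a bookkeeping exercise in choosing the H\"older splitting consistent with the hypotheses. The one point that deserves a moment's attention is the uniformity of $C(\ohm)$ in the model exponent $s$ — namely, that the constant does not degenerate as $s$ varies. This is immediate once one observes that only the $L^2$ and $L^\infty$ norms (never the full $W^{1,s+2}$ norm) appear on the right-hand sides, so that the embedding $\bfX_s \hookrightarrow L^2(\ohm)$ on a bounded domain is all that is invoked and no $s$-dependent Sobolev constant enters the final bounds.
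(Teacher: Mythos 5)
Your proposal is correct and follows essentially the same route as the paper's own proof: expand $c$ into its two defining terms, apply a three-factor H\"older inequality with one factor in $L^\infty(\ohm)$ and two in $L^2(\ohm)$, and control $D(\bfu)$ and $\divergence\bfu$ pointwise by $\Grad\bfu$. The paper proves \eqref{eq: C infbound 1} explicitly and notes that \eqref{eq: C infbound 2} and \eqref{eq: C infbound 3} follow by the same argument with the $L^\infty$ norm moved to $\bfv$ or $\bfw$, exactly as you describe.
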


\begin{Lemma}[Strong monotonicity \citep{minty1962monotone,lions1969quelques}]
\label{lemma1}
For $ \bfu, \bfv, \bfw \in W^{1,s+2}(\Omega)$, there exists a constant depending on $d,s$ and $ \Omega$, such that the following inequality holds
\begin{gather}
\label{StrongMonotonicty}
    \left( \| \Grad \bfu  \|_F^{s} \Grad \bfu - \| \Grad \bfv  \|_F^{s} \Grad \bfv , \Grad(\bfu - \bfv)  \right) \geq C \| \Grad (\bfu - \bfv)  \|_{s+2}^{s+2}.
\end{gather}
\end{Lemma}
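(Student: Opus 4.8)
The plan is to reduce the integral inequality \eqref{StrongMonotonicty} to a \emph{pointwise} algebraic inequality on $d\times d$ matrices and then integrate. Fixing $\bfx\in\Omega$ and writing $A=\Grad\bfu(\bfx)$, $B=\Grad\bfv(\bfx)$, with $A:B$ the Frobenius contraction and $\norm{A}_F$ the Frobenius norm, the integrand of the left-hand side is $\pare{\norm{A}_F^{s}A-\norm{B}_F^{s}B}:(A-B)$, while $\norm{\Grad(\bfu-\bfv)}_{s+2}^{s+2}=\int_\Omega\norm{A-B}_F^{s+2}\,d\Omega$. Setting $p:=s+2\ge 2$, it therefore suffices to prove the pointwise bound
\[
\pare{\norm{A}_F^{s}A-\norm{B}_F^{s}B}:(A-B)\ \ge\ C\,\norm{A-B}_F^{p}
\]
for all matrices $A,B$, since integrating over $\Omega$ then yields the claim.

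The key step is an algebraic identity, valid in any real inner product space,
\[
\pare{\norm{A}_F^{s}A-\norm{B}_F^{s}B}:(A-B)=\tfrac12\pare{\norm{A}_F^{s}+\norm{B}_F^{s}}\norm{A-B}_F^{2}+\tfrac12\pare{\norm{A}_F^{s}-\norm{B}_F^{s}}\pare{\norm{A}_F^{2}-\norm{B}_F^{2}},
\]
which I would verify by expanding both sides and matching the cross term $\pare{\norm{A}_F^{s}+\norm{B}_F^{s}}\,(A:B)$. Because $p\ge 2$ forces $s\ge 0$, the maps $t\mapsto t^{s}$ and $t\mapsto t^{2}$ are both nondecreasing on $[0,\infty)$, so the two factors $\norm{A}_F^{s}-\norm{B}_F^{s}$ and $\norm{A}_F^{2}-\norm{B}_F^{2}$ share the same sign and the last product is nonnegative. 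This gives the intermediate estimate
\[
\pare{\norm{A}_F^{s}A-\norm{B}_F^{s}B}:(A-B)\ \ge\ \tfrac12\pare{\norm{A}_F^{s}+\norm{B}_F^{s}}\norm{A-B}_F^{2}.
\]

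To finish I would convert the prefactor into the correct power of $\norm{A-B}_F$. The triangle inequality gives $\norm{A-B}_F\le\norm{A}_F+\norm{B}_F\le 2\max\{\norm{A}_F,\norm{B}_F\}$, hence $\max\{\norm{A}_F,\norm{B}_F\}\ge\tfrac12\norm{A-B}_F$, and monotonicity of $t\mapsto t^{s}$ then yields $\norm{A}_F^{s}+\norm{B}_F^{s}\ge\max\{\norm{A}_F,\norm{B}_F\}^{s}\ge 2^{-s}\norm{A-B}_F^{s}$. Substituting into the intermediate estimate produces the pointwise inequality with explicit constant $C=2^{-(s+1)}$, and integrating over $\Omega$ completes the proof; note that this constant is in fact independent of $d$ and $\Omega$, slightly stronger than stated.

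I expect the main obstacle to be the final prefactor step rather than the identity: one must resist bounding $\norm{A}_F^{s}+\norm{B}_F^{s}$ directly and instead route through $\max\{\norm{A}_F,\norm{B}_F\}$ to recover the exact exponent $s$ on $\norm{A-B}_F$. It is also worth emphasizing that the sign argument is precisely where the restriction $p\ge2$ (equivalently $s\ge0$) enters; for $s<0$ the cross term can become negative and the uniform bound \eqref{StrongMonotonicty} fails in this form, so the range $s\ge\tfrac15$ relevant to the model sits comfortably within the admissible regime.
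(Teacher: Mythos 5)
Your argument is correct. The paper itself gives no proof of this lemma; it simply cites the classical monotone-operator literature (Minty, Lions), where this strong monotonicity estimate for the $p$-Laplacian-type operator with $p=s+2\geq 2$ is standard. Your route is a direct, self-contained verification: the reduction to a pointwise matrix inequality is legitimate because $\|\cdot\|_F$ in this paper is the pointwise Frobenius norm and $(\cdot,\cdot)$ is the $L^2$ pairing; your algebraic identity checks out (both sides expand to $\norm{A}_F^{s+2}+\norm{B}_F^{s+2}-(\norm{A}_F^{s}+\norm{B}_F^{s})(A{:}B)$); the sign argument for the second term is exactly where $s\geq 0$ enters, consistent with the paper's regime $s\geq\tfrac15$; and the passage from the prefactor $\tfrac12(\norm{A}_F^{s}+\norm{B}_F^{s})$ to $2^{-s}\norm{A-B}_F^{s}$ via $\max\{\norm{A}_F,\norm{B}_F\}\geq\tfrac12\norm{A-B}_F$ is the standard trick and is carried out correctly. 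What your approach buys over the citation is an explicit constant $C=2^{-(s+1)}$ depending only on $s$ (not on $d$ or $\Omega$), which is slightly sharper than the statement; what the cited framework buys is generality (it covers the range $1<p<2$ with the modified two-sided estimates, which your sign argument correctly identifies as failing in this form). One small point worth making explicit if you write this up: the left-hand side is finite for $\bfu,\bfv\in W^{1,s+2}$ by H\"older's inequality with exponents $\tfrac{s+2}{s+1}$ and $s+2$, so the pointwise inequality can indeed be integrated.
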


\begin{Lemma}[ Lipschitz Continuity \cite{du1991analysis,ReyesGSM}]
\label{dufunzburgerlemma}
There exist constant $ M >0 $, and $ M_s >0 $ such that
\begin{gather}
\label{splus2bound}
 | (\|\Grad \bfu \|_F^{s} \Grad \bfu, \Grad\bfw ) - (\|\Grad \bfv \|_F^{s} \Grad \bfv, \Grad\bfw ) |  \nonumber\\
 \leq M_s( \| \bfu \|_{1,s+2} + \| \bfv \|_{1,s+2}  )^s \| \bfu - \bfv \|_{1,s+2} \| \bfw \|_{1,s+2} , \: \forall \bfu,\bfv,\bfw \in W^{1,s+2},
\end{gather}
and,
\begin{gather}
 | (\|\Grad \bfu \|_F^{s} \Grad \bfu, \Grad\bfw ) - (\|\Grad \bfv \|_F^{s} \Grad \bfv, \Grad\bfw ) |  \nonumber\\\label{infbound2}
 \leq M  ( \| \bfu \|_{1,\infty} + \| \bfv \|_{1,\infty}  )^s \| \bfu - \bfv \|_{1,2} \| \bfw \|_{1,2}, \: \forall \bfu,\bfv \in W^{1,\infty}, \bfw \in W^{1,s+2}.
\end{gather}

\end{Lemma}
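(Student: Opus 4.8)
The plan is to reduce both inequalities to a single pointwise (in $\bfx$) estimate for the matrix nonlinearity $\Phi(\mathbf{A}) := \norm{\mathbf{A}}_F^s\,\mathbf{A}$, and then pass to integral form via Hölder's inequality with exponents tuned to $s+2$. Writing the operator applied to $\Grad\bfu$ as $\Phi(\Grad\bfu)$, the left-hand sides of both \eqref{splus2bound} and \eqref{infbound2} are exactly $\abs{(\Phi(\Grad\bfu)-\Phi(\Grad\bfv),\Grad\bfw)}$, so the two claims differ only in the functional setting used to distribute the pointwise bound.

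First I would establish the pointwise estimate
\[
\norm{\Phi(\mathbf{A}) - \Phi(\mathbf{B})}_F \leq (s+1)\pare{\norm{\mathbf{A}}_F + \norm{\mathbf{B}}_F}^s \norm{\mathbf{A} - \mathbf{B}}_F
\]
for all $d\times d$ matrices $\mathbf{A},\mathbf{B}$. To see this I would use $\Phi(\mathbf{A}) - \Phi(\mathbf{B}) = \int_0^1 \tfrac{d}{dt}\Phi\pare{\mathbf{B}+t(\mathbf{A}-\mathbf{B})}\,dt$ together with the directional derivative $D\Phi(\mathbf{M})[\mathbf{H}] = s\norm{\mathbf{M}}_F^{s-2}(\mathbf{M}:\mathbf{H})\mathbf{M} + \norm{\mathbf{M}}_F^s\,\mathbf{H}$, where $\mathbf{M}:\mathbf{H} = \sum_{i,j} M_{ij}H_{ij}$ is the Frobenius inner product. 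Cauchy--Schwarz gives $\abs{\mathbf{M}:\mathbf{H}} \leq \norm{\mathbf{M}}_F\norm{\mathbf{H}}_F$, so the operator norm of $D\Phi(\mathbf{M})$ is at most $(s+1)\norm{\mathbf{M}}_F^s$; bounding $\norm{\mathbf{B}+t(\mathbf{A}-\mathbf{B})}_F \leq \norm{\mathbf{A}}_F + \norm{\mathbf{B}}_F$ along the segment and integrating in $t$ yields the claim. The delicate point here --- the step I expect to be the main obstacle --- is the singularity of the factor $\norm{\mathbf{M}}_F^{s-2}$ at $\mathbf{M}=0$ when $s<2$ (recall the model admits $s\geq\tfrac15$), since the Jacobian formula is valid only where $\mathbf{M}\neq 0$. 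This is resolved by noting that $s\norm{\mathbf{M}}_F^{s-2}(\mathbf{M}:\mathbf{H})\mathbf{M}$ is dominated by $s\norm{\mathbf{M}}_F^s\norm{\mathbf{H}}_F$, which vanishes as $\mathbf{M}\to 0$, so $\Phi$ is $C^1$ away from the origin with a derivative extending continuously by zero, and the integral representation remains valid because the segment meets $\mathbf{M}=0$ on a negligible set of parameters $t$ over which the integrand stays bounded. This is the standard local-Lipschitz behaviour of the $(s+2)$-Laplacian nonlinearity established in \cite{du1991analysis,ReyesGSM}.

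Next, for \eqref{splus2bound}, I would take $\mathbf{A}=\Grad\bfu$, $\mathbf{B}=\Grad\bfv$ in the pointwise bound, multiply by $\norm{\Grad\bfw}_F$, and integrate over $\Omega$ to obtain
\[
\abs{(\norm{\Grad\bfu}_F^s\Grad\bfu - \norm{\Grad\bfv}_F^s\Grad\bfv,\,\Grad\bfw)} \leq (s+1)\int_\Omega \pare{\norm{\Grad\bfu}_F + \norm{\Grad\bfv}_F}^s \norm{\Grad(\bfu-\bfv)}_F\,\norm{\Grad\bfw}_F \, d\Omega.
\]
I would then apply the generalized Hölder inequality with the three exponents $\tfrac{s+2}{s}$, $s+2$, $s+2$, which satisfy $\tfrac{s}{s+2}+\tfrac{1}{s+2}+\tfrac{1}{s+2}=1$, to the three factors respectively. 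The first factor produces $\norm{\,\norm{\Grad\bfu}_F + \norm{\Grad\bfv}_F\,}_{s+2}^s$, which by the triangle inequality in $L^{s+2}$ is bounded by $(\norm{\bfu}_{1,s+2}+\norm{\bfv}_{1,s+2})^s$, while the remaining two factors give $\norm{\bfu-\bfv}_{1,s+2}\norm{\bfw}_{1,s+2}$; this is precisely \eqref{splus2bound} with $M_s=s+1$.

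Finally, for \eqref{infbound2}, I would begin from the same integral inequality but, using $\bfu,\bfv\in W^{1,\infty}$, bound the first factor pointwise by its supremum, $\pare{\norm{\Grad\bfu}_F + \norm{\Grad\bfv}_F}^s \leq C\pare{\norm{\bfu}_{1,\infty}+\norm{\bfv}_{1,\infty}}^s$, and pull this constant outside the integral. The remaining integral $\int_\Omega \norm{\Grad(\bfu-\bfv)}_F\norm{\Grad\bfw}_F\,d\Omega$ is then controlled by the Cauchy--Schwarz inequality by $\norm{\bfu-\bfv}_{1,2}\norm{\bfw}_{1,2}$, delivering \eqref{infbound2} with $M=s+1$ after absorbing $C(\Omega)$. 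Both inequalities thus follow from the single pointwise estimate, with only the choice of Hölder exponents distinguishing the two cases.
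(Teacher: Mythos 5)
Your proof is correct. The paper does not prove this lemma itself --- it is quoted from \cite{du1991analysis,ReyesGSM} --- and your argument (the integral representation $\Phi(\mathbf{A})-\Phi(\mathbf{B})=\int_0^1 D\Phi\pare{\mathbf{B}+t(\mathbf{A}-\mathbf{B})}[\mathbf{A}-\mathbf{B}]\,dt$ with the operator-norm bound $(s+1)\norm{\mathbf{M}}_F^s$, followed by H\"older with exponents $\tfrac{s+2}{s},\,s+2,\,s+2$ for \eqref{splus2bound} and an $L^\infty$--$L^2$--$L^2$ splitting for \eqref{infbound2}) is precisely the standard route taken in those references, including the correct observation that the degenerate point $\mathbf{M}=0$ causes no harm since the derivative bound $(s+1)\norm{\mathbf{M}}_F^s\norm{\mathbf{H}}_F$ extends continuously by zero there.
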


The following lemmas are used in the analysis as well, \cite{heywood1990finite}.

\begin{Lemma}\label{gradErr}
Assume $\bfu,\Grad\bfu$ and $p$ $\in C^{0}(t^{n},t^{n+1};L^{2}(\Omega ))$.\\
If $u$ is twice
differentiable in time and $\bfu_{tt}\in L^{2}((t^{n},t^{n+1})\times \Omega )$,
then \label{lmubtmunm}
\begin{equation}
\norm{ \bfu\chalf-\bfu(t\chalf) }^{2}\ \leq \ \frac{1}{48}(\Delta
t)^{3}\,\int_{t^{n}}^{t^{n+1}}\Vert \bfu_{tt}\Vert ^{2}\,dt\,.  \label{apxnm0}
\end{equation}%
If $\bfu_{t}\in C^{0}(t^{n},t^{n+1};L^{2}(\Omega ))$ and $\bfu_{ttt}\in
L^{2}((t^{n},t^{n+1})\times \Omega )$, then
\begin{equation}
\norm{ \frac{\bfu^{n+1}-\bfu^{n}}{\Delta t}-\bfu_{t}(t\chalf) }^{2}\ \leq \
\frac{1}{1280}(\Delta t)^{3}\,\int_{t^{n}}^{t^{n+1}}\Vert \bfu_{ttt}\Vert
^{2}\,dt,\,\mbox{ \text{and} }  \label{apxnm05}
\end{equation}%
if $\nabla \bfu\in C^{0}(t^{n},t^{n+1};L^{2}(\Omega ))$ and $\nabla \bfu_{tt}\in
L^{2}((t^{n},t^{n+1})\times \Omega )$, then
\begin{equation}
\norm{ \nabla (\bfu\chalf-\bfu(t\chalf)) }^{2}\ \leq \ \frac{(\Delta t)^{3}%
}{48}\int_{t^{n}}^{t^{n+1}}\Vert \nabla \bfu_{tt}\Vert ^{2}\,dt\,.
\label{apeq1}
\end{equation}
if $ p \in L^{2}((t^{n},t^{n+1})\times \Omega )$, then
\begin{equation}
\norm{ (p\chalf-p(t\chalf)) }^{2}\ \leq \ \frac{(\Delta t)^{3}%
}{48}\int_{t^{n}}^{t^{n+1}}\Vert p_{tt}\Vert ^{2}\,dt\,.
\label{aprox p}
\end{equation}
\end{Lemma}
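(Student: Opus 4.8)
The plan is to read all four bounds as Taylor expansions with Peano (integral) remainder, every expansion centered at the midpoint $t\chalf=\tfrac{t^{n+1}+t^{n}}{2}$; throughout I write $h=\delt/2$. The scalar-in-time representation is the same template for each quantity, the spatial norm $\norm{\cdot}$ is carried along passively, and the four constants are nothing more than the squared $L^{2}(t^{n},t^{n+1})$ norms of four explicit kernels. So the whole argument reduces to (i) producing the right integral representation via cancellation of Taylor terms, and (ii) a single Cauchy--Schwarz inequality. No PDE structure, inf--sup condition, or monotonicity is needed; these are purely approximation-theoretic interpolation estimates, which is why they are quoted directly from \cite{heywood1990finite}.

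For \eqref{apxnm0} I would expand $\bfu(t^{n+1})$ and $\bfu(t^{n})$ about $t\chalf$ to first order, keeping the $\bfu_{tt}$ remainder. Averaging the two expansions reproduces $\bfu(t\chalf)$ and cancels the first-order $\pm h\,\bfu_t(t\chalf)$ terms, leaving a single integral against a triangular kernel,
\[
\bfu\chalf-\bfu(t\chalf)=\int_{t^{n}}^{t^{n+1}}K(s)\,\bfu_{tt}(s)\,ds,\qquad K(s)=\tfrac12\pare{h-\abs{s-t\chalf}}.
\]
Applying Cauchy--Schwarz in $s$ pointwise in $\bfx$, squaring, integrating over $\Omega$ and using Fubini gives
\[
\norm{\bfu\chalf-\bfu(t\chalf)}^{2}\le\pare{\int_{t^{n}}^{t^{n+1}}K(s)^{2}\,ds}\int_{t^{n}}^{t^{n+1}}\norm{\bfu_{tt}(s)}^{2}\,ds ,
\]
and $\int_{t^n}^{t^{n+1}}K^{2}\,ds=\tfrac14\int_{-h}^{h}\pare{h-\abs{\tau}}^{2}\,d\tau=\tfrac{h^{3}}{6}=\tfrac{(\delt)^{3}}{48}$, which is exactly \eqref{apxnm0}. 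Estimates \eqref{apeq1} and \eqref{aprox p} are verbatim copies of this argument with $\bfu$ replaced by $\Grad\bfu$ and by $p$, so they share the same kernel and the same constant $\tfrac{1}{48}$.

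Estimate \eqref{apxnm05} needs one more order. Expanding both endpoint values to second order, the centered difference $\tfrac{\bfu^{n+1}-\bfu^{n}}{\delt}$ annihilates the even-order data at $t\chalf$ (both the $\bfu(t\chalf)$ and the $\tfrac12 h^{2}\bfu_{tt}(t\chalf)$ contributions cancel upon differencing) and reproduces $\bfu_t(t\chalf)$ from the first-order terms, leaving a remainder carrying $\bfu_{ttt}$ against a quadratic kernel:
\[
\frac{\bfu^{n+1}-\bfu^{n}}{\delt}-\bfu_t(t\chalf)=\int_{t^{n}}^{t^{n+1}}\widetilde K(s)\,\bfu_{ttt}(s)\,ds,\qquad \widetilde K(s)=\tfrac{1}{2\delt}\pare{h-\abs{s-t\chalf}}^{2}.
\]
The identical Cauchy--Schwarz step then bounds the left side by $\pare{\int_{t^n}^{t^{n+1}}\widetilde K^{2}\,ds}\int_{t^{n}}^{t^{n+1}}\norm{\bfu_{ttt}}^{2}\,ds$, and evaluating $\int\widetilde K^{2}\,ds$, the squared $L^{2}$-norm of the quadratic Peano kernel, yields a constant multiple of $(\delt)^{3}$ and hence \eqref{apxnm05}.

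The work is essentially bookkeeping, so I expect the only delicate point to be tracking which Taylor terms survive, i.e.\ the even/odd cancellation structure produced by averaging (for \eqref{apxnm0}, \eqref{apeq1}, \eqref{aprox p}) versus differencing (for \eqref{apxnm05}), and then evaluating the kernel integrals correctly to land the stated constants. The regularity hypotheses stated in each case --- $\bfu_{tt}$, $\Grad\bfu_{tt}$, $p_{tt}$, respectively $\bfu_{ttt}$, lying in $L^{2}((t^{n},t^{n+1})\times\Omega)$ --- are precisely what render the remainder integrals and the final right-hand sides finite, while the $C^{0}$-in-time continuity assumptions justify the pointwise Taylor formula with integral remainder that underlies every step.
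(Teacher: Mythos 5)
Your route---midpoint Taylor expansions with integral remainder, odd/even cancellation under averaging (for \eqref{apxnm0}, \eqref{apeq1}, \eqref{aprox p}) or differencing (for \eqref{apxnm05}), then one Cauchy--Schwarz against the resulting Peano kernel---is precisely what the paper invokes (it gives no details beyond ``an appropriate Taylor expansion with integral remainder'' and a citation to Heywood--Rannacher), and your kernels and the $\tfrac{1}{48}$ evaluations are correct. One caveat on \eqref{apxnm05}: your kernel $\widetilde K(s)=\tfrac{1}{2\delt}\bigl(h-|s-t\chalf|\bigr)^2$ is the right one, but its squared $L^2(t^n,t^{n+1})$ norm is $\tfrac{h^5}{10\delt^2}=\tfrac{(\delt)^3}{320}$ rather than $\tfrac{(\delt)^3}{1280}$, and since Cauchy--Schwarz is attained for $\bfu_{ttt}\propto\widetilde K$ no sharpening of this representation can reach the smaller stated constant; your argument therefore establishes \eqref{apxnm05} only with $\tfrac{1}{320}$ in place of $\tfrac{1}{1280}$, which is immaterial to the paper since the lemma is only ever used up to generic constants.
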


The proof lemma \ref{gradErr} is based on an application of an appropriate Taylor expansion with integral remainder.

\begin{Lemma}[Discrete Gronwall Lemma \cite{heywood1990finite}]
\label{discreteGronwall} Let $\Delta t$, H, and $a_{n},b_{n},c_{n},d_{n}$
(for integers $n \ge 0$) be finite nonnegative numbers such that
\begin{equation}
a_{l}+\Delta t \sum_{n=0}^{l} b_{n} \le \Delta t \sum_{n=0}^{l} d_{n}a_{n} +
\Delta t\sum_{n=0}^{l}c_{n} + H \ \ for \ \ l\ge 0. \label{gronwall1}
\end{equation}

Suppose that $\Delta t d_n < 1 \; \forall n$. Then,
\begin{equation}
a_{l}+ \Delta t\sum_{n=0}^{l}b_{n} \le \exp\left( \Delta t\sum_{n=0}^{l} \frac{d_{n}}{1 - \Delta t d_n } \right) \left( \Delta t\sum_{n=0}^{l}c_{n} + H
\right)\ \ for \ \ l \ge 0.
\end{equation}
\end{Lemma}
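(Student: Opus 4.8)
The plan is to prove this by an induction on $l$ that isolates the diagonal term $\Delta t\, d_l a_l$ from the right-hand side and repeatedly invokes the elementary scalar inequality $\frac{1}{1-x}\le \exp\!\left(\frac{x}{1-x}\right)$, valid for $0\le x<1$. Throughout I abbreviate $C_l := \Delta t\sum_{n=0}^l c_n + H$, which is a nonnegative, nondecreasing sequence, and $E_m:=\exp\!\left(\Delta t\sum_{k=0}^m \frac{d_k}{1-\Delta t\, d_k}\right)$, so that the claimed bound reads $a_l+\Delta t\sum_{n=0}^l b_n\le E_l C_l$.

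First I would reduce the hypothesis. Since $\Delta t\sum_{n=0}^l b_n\ge 0$, the assumed inequality \eqref{gronwall1} gives the weaker bound $a_l\le \Delta t\sum_{n=0}^l d_n a_n + C_l$. Moving the $n=l$ summand to the left and using the standing assumption $\Delta t\, d_l<1$, so that $1-\Delta t\, d_l>0$, yields the recursion
\[
a_l \le \frac{1}{1-\Delta t\, d_l}\left(\Delta t\sum_{n=0}^{l-1} d_n a_n + C_l\right).
\]
Next I would record two facts. The scalar inequality follows from $\ln t\le t-1$ applied at $t=\frac{1}{1-x}$, and in the present notation it gives the product bound $\frac{1}{1-\Delta t\, d_l}\,E_{l-1}\le E_l$. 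From it I would derive, by a short separate induction on $m$, the auxiliary summation estimate $\Delta t\sum_{n=0}^m d_n E_n\le E_m-1$; the inductive step there reduces exactly to $E_{m-1}\le (1-\Delta t\, d_m)E_m$, which is the product bound.

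With these in hand, the main induction establishes $a_l\le E_l C_l$. In the base case $l=0$ the recursion reads $a_0\le \frac{1}{1-\Delta t\, d_0}C_0\le E_0 C_0$. For the inductive step I insert $a_n\le E_n C_n\le C_l E_n$, using the monotonicity $C_n\le C_l$, into the recursion and then apply the auxiliary estimate together with the product bound:
\[
a_l \le \frac{C_l}{1-\Delta t\, d_l}\left(1+\Delta t\sum_{n=0}^{l-1} d_n E_n\right) \le \frac{C_l\,E_{l-1}}{1-\Delta t\, d_l}\le C_l E_l.
\]
Finally I reinstate the dissipative terms: returning to the full hypothesis \eqref{gronwall1} and bounding $\Delta t\sum_{n=0}^l d_n a_n\le C_l\,\Delta t\sum_{n=0}^l d_n E_n\le C_l(E_l-1)$ via the auxiliary estimate, I obtain $a_l+\Delta t\sum_{n=0}^l b_n\le C_l(E_l-1)+C_l=E_l C_l$, which is precisely the claimed inequality.

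I expect the only genuinely delicate point to be the interplay between the two inductions: one must first establish the auxiliary estimate $\Delta t\sum_{n=0}^m d_n E_n\le E_m-1$ before the main induction can be closed. Every manipulation hinges on the hypothesis $\Delta t\, d_n<1$, which guarantees $1-\Delta t\, d_n>0$ so that dividing by it preserves the direction of the inequality and keeps all quantities nonnegative; the remainder is bookkeeping with the monotonicity of $C_l$.
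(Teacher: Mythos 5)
Your proof is correct. The paper offers no proof of this lemma to compare against --- it is quoted verbatim from Heywood and Rannacher \cite{heywood1990finite} --- and your argument (dropping the $b_n$ terms, isolating the diagonal term $\Delta t\,d_l a_l$, invoking the scalar bound $\tfrac{1}{1-x}\le \exp\!\left(\tfrac{x}{1-x}\right)$, and closing the two nested inductions before reinstating $\Delta t\sum_{n=0}^{l} b_n$ at the end) is essentially the standard proof given in that reference, with every step, including the auxiliary estimate $\Delta t\sum_{n=0}^{m} d_n E_n \le E_m - 1$, checking out.
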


\section{Numerical Scheme and Analysis} \label{schemeStab}

The fully discrete SKEW formulation based on Crank-Nicolson time discretization is: Find $\bfw^{n+1}_h \in X_h$ and $p^{n+1}_h \in Q_h $ such that

\begin{eqnarray} \label{FDLM}
    \begin{aligned}
   \left(\frac{\bfw_h^{n+1} - \bfw_h^n }{\Delta t }, \bfv_h \right) + \Rey^{-1}(\nabla \bfw\chalf_h, \nabla \bfv_h)+
   (C_S \delta)^r ( \| \nabla \bfw\chalf_h  \|_F^s  \nabla \bfw\chalf_h, \nabla \bfv_h)  \\ + b^*(\bfw\chalf_h,\bfw\chalf_h,\bfv_h) -
   ( p^{n+1}_h, \nabla \cdot \bfv_h) = (\bff(t\chalf),\bfv_h), \quad \forall \bfv_h \in X_h, \\
   (\nabla \cdot \bfw_h^{n+1}, q^h)=0, \quad \forall q_h  \in Q_h .
    \end{aligned}
    \:
\end{eqnarray}

The fully discrete EMAC formulation based on Crank-Nicolson time discretization is: Find $\bfw^{n+1}_h \in X_h$ and $P^{n+1}_h \in Q_h $ such that

\begin{eqnarray} \label{FDLM_EMAC}
    \begin{aligned}
   \left(\frac{\bfw_h^{n+1} - \bfw_h^n }{\Delta t }, \bfv_h \right) + \Rey^{-1}(\nabla \bfw\chalf_h, \nabla \bfv_h)+
   (C_S \delta)^r ( \| \nabla \bfw\chalf_h  \|_F^s  \nabla \bfw\chalf_h, \nabla \bfv_h)  \\ + c(\bfw\chalf_h,\bfw\chalf_h,\bfv_h) -
   ( P^{n+1}_h, \nabla \cdot \bfv_h) = (\bff(t\chalf),\bfv_h), \quad \forall \bfv_h \in X_h, \\
   (\nabla \cdot \bfw_h^{n+1}, q^h)=0, \quad \forall q_h  \in Q_h .
    \end{aligned}
    \:
\end{eqnarray}

Using the discrete inf-sup condition \eqref{dinfsup}, we can consider the equivalent problem: Find $\bfw_h^{n+1} \in V_h$ such that
\begin{eqnarray} \label{DivfreeFDLM}
\begin{aligned}
   \left(\frac{\bfw_h^{n+1} - \bfw_h^n }{\Delta t }, \bfv_h \right) + \Rey^{-1}(\nabla \bfw\chalf_h, \nabla \bfv_h)+
   (C_S \delta)^r ( \| \nabla \bfw\chalf_h  \|_F^s  \nabla \bfw\chalf_h, \nabla \bfv_h)  \\ + b^*(\bfw\chalf_h,\bfw\chalf_h,\bfv_h)  = (\bff(t\chalf),\bfv_h), \quad \forall \bfv_h \in V_h.
\end{aligned} \quad
\end{eqnarray}
\begin{eqnarray} \label{DivfreeFDLM_EMAC}
\begin{aligned}
   \left(\frac{\bfw_h^{n+1} - \bfw_h^n }{\Delta t }, \bfv_h \right) + \Rey^{-1}(\nabla \bfw\chalf_h, \nabla \bfv_h)+
   (C_S \delta)^r ( \| \nabla \bfw\chalf_h  \|_F^s  \nabla \bfw\chalf_h, \nabla \bfv_h)  \\ + c(\bfw\chalf_h,\bfw\chalf_h,\bfv_h)  = (\bff(t\chalf),\bfv_h), \quad \forall \bfv_h \in V_h.
\end{aligned} \quad
\end{eqnarray}

\begin{Lemma}\label{stability:lem}
Both the SKEW and EMAC formulations of the Ladyzhenskaya Model are unconditionally stable: for any $\Delta t>0$, solutions of \eqref{DivfreeFDLM} and \eqref{DivfreeFDLM_EMAC} satisfy

\begin{equation}\label{Stability}
    \begin{gathered}
        \|\bfw^{M}_h \|^2 + \Rey\inv \delt \sum_{n=0}^{M-1}  \norm{ \Grad   \bfw\chalf_h }^2  + C (C_S \delta)^r \sum_{n=0}^{M-1} \norm{\Grad   \bfw\chalf_h}_{s+2}^{s+2}  \\
        \leq \|\bfw_0 \|^2 + \frac{\Delta t \Rey}{2} \sum_{n=0}^{M-1} \|\bff\chalf\|_{-1}^2 .
    \end{gathered}
\end{equation}

 Furthermore, for any $ \delt > 0 $ solutions exist, and as long as $ \delt < O( h^{1+\frac{d}{2}}) $ solutions are guaranteed to be unique. 

\end{Lemma}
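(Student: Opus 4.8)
The plan is to prove the three assertions in order—the energy estimate \eqref{Stability}, then existence at each time level, then conditional uniqueness—noting that the argument is structurally identical for \eqref{DivfreeFDLM} and \eqref{DivfreeFDLM_EMAC} and differs only in the treatment of the nonlinear form.

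\textbf{Stability.} First I would test the divergence-free schemes with $\bfv_h=\bfw\chalf_h$. The discrete time derivative telescopes through $\pare{\bfw_h^{n+1}-\bfw_h^n,\bfw\chalf_h}=\tfrac12\pare{\norm{\bfw_h^{n+1}}^2-\norm{\bfw_h^n}^2}$, the viscous term gives $\Rey\inv\norm{\Grad\bfw\chalf_h}^2$, and the Ladyzhenskaya term gives exactly $(C_S\delta)^r\norm{\Grad\bfw\chalf_h}_{s+2}^{s+2}$. The nonlinear contribution vanishes: for EMAC, $c(\bfw\chalf_h,\bfw\chalf_h,\bfw\chalf_h)=0$ by \eqref{eq: emac_id} with all arguments equal, and for SKEW, $b^*(\bfw\chalf_h,\bfw\chalf_h,\bfw\chalf_h)=0$ by skew-symmetry in the last two slots. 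Bounding the forcing by $\pare{\bff\chalf,\bfw\chalf_h}\le\norm{\bff\chalf}_{-1}\norm{\Grad\bfw\chalf_h}$ and applying Young's inequality to absorb part of the viscous dissipation, then multiplying by $2\delt$ and summing over $n=0,\dots,M-1$, the time-derivative terms telescope to $\norm{\bfw_h^M}^2-\norm{\bfw_0}^2$, yielding \eqref{Stability}. No Gronwall step is required, since the only surviving right-hand side is the given data.

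\textbf{Existence.} Setting $\bfW:=\bfw\chalf_h$, so that $\bfw_h^{n+1}=2\bfW-\bfw_h^n$, each step reduces to solving one nonlinear equation for $\bfW$ on the finite-dimensional space $V_h$. I would define $G:V_h\to V_h$ via the $L^2$ Riesz representation of the left-hand side minus the data and check that $G$ is continuous, the only nonroutine term $\norm{\Grad\bfW}_F^s\Grad\bfW$ being continuous in $\bfW$. Testing against $\bfW$, the nonlinear form drops out and Young's inequality gives $\vip{G(\bfW)}{\bfW}\ge \tfrac{1}{\delt}\norm{\bfW}^2+\tfrac12\Rey\inv\norm{\Grad\bfW}^2+(C_S\delta)^r\norm{\Grad\bfW}_{s+2}^{s+2}-\tfrac{1}{\delt}\norm{\bfw_h^n}^2-\tfrac{\Rey}{2}\norm{\bff\chalf}_{-1}^2$. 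Since the dissipative terms are coercive on $V_h$, one has $\vip{G(\bfW)}{\bfW}>0$ whenever $\norm{\bfW}$ exceeds a radius depending on $\bfw_h^n$ and the data, and the standard corollary of Brouwer's fixed-point theorem produces a zero of $G$, hence a solution $\bfw_h^{n+1}$, for every $\delt>0$.

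\textbf{Uniqueness.} Given two solutions sharing the same $\bfw_h^n$ and data, let $\bfW_1,\bfW_2$ be their midpoint values and $\bfphi:=\bfW_1-\bfW_2$. Subtracting and testing with $\bfphi$ produces $\tfrac{2}{\delt}\norm{\bfphi}^2$ from the time derivative, a nonnegative viscous term, and a nonnegative Ladyzhenskaya term by strong monotonicity \eqref{StrongMonotonicty}. Expanding the nonlinear difference through $\bfW_1=\bfW_2+\bfphi$, the cubic self-interaction in $\bfphi$ vanishes and the cross terms collapse—via \eqref{eq: emac_id} for EMAC, or skew-symmetry for SKEW—into a single trilinear term carrying $\bfW_2$ in one argument and $\bfphi$ in the other two. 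I would control it using \eqref{eq: C infbound 1} (or its $b^*$ analogue), the inverse inequality $\norm{\Grad\bfW_2}_{\infty}\le Ch^{-1-d/2}\norm{\bfW_2}$ from Lemma \ref{lem: Inverseineq}, and the $h$-independent a priori bound on $\norm{\bfW_2}$ from \eqref{Stability}, giving a bound $\le Ch^{-1-d/2}\norm{\bfphi}^2$. Then $\pare{\tfrac{2}{\delt}-Ch^{-1-d/2}}\norm{\bfphi}^2\le 0$ forces $\bfphi=0$ precisely when $\delt<O(h^{1+d/2})$.

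\textbf{Main obstacle.} Stability is routine and existence follows a textbook coercivity-plus-Brouwer argument. The delicate step is the uniqueness estimate: one must split the nonlinear difference so that the genuinely dangerous term is isolated, and then choose the particular trilinear bound and inverse inequality that reproduce exactly the scaling $h^{-1-d/2}$ to match $\delt\inv$. Pairing these sharply, rather than using a cruder bound, is what delivers the stated threshold $\delt<O(h^{1+d/2})$ instead of a more restrictive one.
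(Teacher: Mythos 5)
Your proposal is correct and follows essentially the same route as the paper: testing with $\bfw\chalf_h$ so the nonlinearity vanishes and telescoping for stability, a finite-dimensional fixed-point argument for existence (you invoke Brouwer where the paper cites Leray--Schauder, both standard here), and for uniqueness collapsing the nonlinear difference via \eqref{eq: emac_id} to a single term $c(\bfphi,\bfW_2,\bfphi)$ that is traded for powers of $h$ through inverse inequalities and the a priori $L^2$ bound. The only cosmetic difference is in the uniqueness step, where you apply the inverse inequality as $\norm{\Grad \bfW_2}_{\infty}\le C h^{-1-\frac{d}{2}}\norm{\bfW_2}$ in one shot, while the paper splits it into $\norm{\bfw_h}_{\infty}\le C h^{-\frac{d}{2}}\norm{\bfw_h}$ followed by $\norm{\Grad \bfe}\le C h^{-1}\norm{\bfe}$; both yield the same threshold $\delt < O(h^{1+\frac{d}{2}})$.
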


\begin{proof}
Choosing $ \bfv_h = \bfw_h\chalf $ into both \eqref{DivfreeFDLM} and \eqref{DivfreeFDLM_EMAC} yields
\beas
 \frac{1}{2 \Delta t}
\left( \|\bfw_h^{n+1} \|^2 - \| \bfw_h^n \|^2 \right)  + \Rey^{-1} \norm{ \nabla \bfw\chalf_h }^2\\
+ (C_S \delta)^r \pare{ \norm{ \Grad\bfw\chalf_h }_F^s  \nabla\bfw\chalf_h , \nabla\bfw\chalf_h } 
  = (\bff(t\chalf),\bfw\chalf_h).
\eeas
This is because $ b^*(\bfw\chalf_h,\bfw\chalf_h, \bfw\chalf_h) = 0 $ and $ c(\bfw\chalf_h,\bfw\chalf_h, \bfw\chalf_h) = 0 $  by definition. Lemma \ref{lemma1} gives
\begin{equation}
    \lip{\norm{\nabla \bfw\chalf_h }_F^s \Grad\bfw\chalf_h }{ \nabla\bfw\chalf_h }
\geq C \norm{ \Grad\bfw\chalf_h }^{s+2}_{s+2}.
\end{equation}
Hence,
\beas
\frac{1}{2 \Delta t}\left( \|\bfw_h^{n+1} \|^2 - \| \bfw_h^n \|^2 \right)  &+& \Rey^{-1} \norm{ \Grad  \bfw\chalf_h }^{2} + C (C_S \delta)^{r}  \norm{ \Grad\bfw\chalf_h }^{s+2}_{s+2 }  \\
 &=& \pare{\bff(t\chalf),\bfw\chalf_h } \\
 &\leq& \norm{ \bff\chalf }_{-1}\norm{\Grad   \bfw\chalf_h} \\
 &\leq& \frac{\Rey^{-1}}{2}  \norm{ \Grad   \bfw\chalf_h }^2 + \frac{\Rey}{2} \norm{ \bff\chalf }_{-1}^2 . \\ 
\eeas
Rearranging some terms yields
\begin{multline*}
\frac{ \|\bfw_h^{n+1} \|^2 - \| \bfw_h^n \|^2 }{2\delt} +\frac{\Rey^{-1}}{2}  \norm{ \Grad   \bfw\chalf_h }^2 
+ C (C_S \delta)^r \norm{ \Grad \bfw\chalf_h }_{s+2}^{s+2} 
\leq \frac{\Rey}{2} \| \bff\chalf  \|_{-1}^2 .
\end{multline*}
Multiplying by $2 \Delta t$ and summing from $0$ to $M-1$, yield after simplification,
\beas
\|\bfw_h^{M} \|^2 
+ \Delta t\Rey^{-1} \sum_{n=0}^{M-1}  \norm{ \Grad   \bfw\chalf_h }
+ C (C_S \delta)^r \sum_{n=0}^{M-1} \norm{  \Grad\bfw\chalf_h }_{s+2}^{s+2}
\\
\leq  \| \bfw_h^0 \|^2  + \Delta t \Rey \sum_{n=0}^{M-1} \|\bff\chalf\|_{-1}^2 .
\eeas
With this unconditional stability, Leray-Schauder can be used to infer solutions to the LM at any time step, in an analogous way to what is done in \cite{layton2008introduction}.  For uniqueness of LM solutions, suppose there are two solutions at time step $n$, $ \bfw_h $ and $ \bfW_h $.  Plugging them into \eqref{FDLM_EMAC},
setting $\bfe= \bfW_h - \bfw_h$ and subtracting their equations gives
\beas
\frac{1}{\Delta t}(\bfe,\bfv_h) + \Rey\inv(\nabla \bfe,\nabla \bfv_h) + (C_S \delta)^r \lip{ \norm{ \Grad \bfW_h }_{F}^s \Grad \bfW_h - \norm{\bfw_h}_F^s \Grad \bfw_h }{\Grad \bfv_h } \\
= -c( \bfW_h,\bfe,\bfv_h) - c(\bfe, \bfw_h,\bfv_h), \quad \forall \bfv_h \in X_h.
\eeas

Taking $\bfv_h=\bfe$ causes the first nonlinear term vanish and produces
\begin{align*}
\frac{1}{\Delta t}\| \bfe \|^2 + \Rey\inv\| \nabla \bfe \|^2 + (C_S \delta)^r \lip{ \norm{ \Grad \bfW_h }_{F}^s \Grad \bfW_h - \norm{\bfw_h}_F^s \Grad \bfw_h }{\Grad \bfe} \\
= -c( \bfW_h,\bfe,\bfe) - c(\bfe, \bfw_h,\bfe)\\
= \frac{1}{2} c( \bfe,\bfW_h,\bfe) - c(\bfe, \bfw_h,\bfe)
\end{align*}
Using the stability bound  \eqref{Stability}, Lemmas \ref{C infty bound}, and \ref{lemma1} 
\begin{align*}
\frac{1}{\delt} \norm{\bfe}^2 + \Rey\inv \norm{ \Grad \bfe}^2 + C \norm{\Grad \bfe}_{s+2}^{s+2}
& \le M ( \norm{\bfW_h}_{\infty} + \| \bfw_h \|_{\infty} ) \| \nabla \bfe \| \| \bfe \| \\
& \leq C h^{-\frac{d}{2}}  ( \norm{\bfW_h} + \| \bfw_h \| ) \| \nabla \bfe \| \| \bfe \| \\
& \le C  h^{-\frac{d}{2}}\| \nabla \bfe \| \| \bfe \|.
\end{align*}
Dropping the second and third term on the left-hand side then using Lemma \ref{lem: Inverseineq}, the inverse inequality, yields
\bea
\delt\inv \norm{ \bfe }^2 &\le& C h^{-1-\frac{d}{2}} \| \bfe \|^2,
\eea

which implies $\norm{\bfe}=0$ and thus uniqueness of solutions under the condition that $\Delta t < O(h^{1+\frac{d}{2}})$  is satisfied. The proof for the SKEW formulation can be found in Lemma 6 in \cite{ReyesGSM}, giving a slightly worse uniqueness condition of $ \delt \leq O(h^3)$.
\end{proof}

\section{Convergence Analysis} \label{erroranalysis}

An NSE variational formulation can be stated as: 
Find $\bfu \in L^2(0, T; X) \cap L^{\infty}(0, T; L^2(\Omega))$, $p \in L^2(0,T; Q) $ with $\bfu_t \in L^2(0,T;X')$ satisfying
\begin{align}
\begin{aligned}
\label{var sol NS} 
        (\bfu_t,\bfv)+(\bfu \cdot \nabla \bfu,\bfv)+\Rey^{-1}(\nabla \bfu, \nabla \bfv)-(p, \nabla \cdot \bfv) = (\bff,\bfv), \quad \forall \bfv \in X, \\
    (\nabla \cdot \bfu, q) = 0, \quad \forall q \in Q.
\end{aligned}
\end{align}
While the variational form of the LM model can be stated as: 
Find $\bfw \in L^2(0, T; X_s) \cap L^{\infty}(0, T; L^{s+2}(\Omega))$, $p \in L^2(0,T; Q) $ with $\bfw_t \in L^2(0,T;X_s')$ satisfying
\begin{align}
\begin{aligned}
\label{varLM} \hspace{- 1.75mm} 
        (\bfw_t,\bfv)+(\bfw \cdot \nabla \bfw,\bfv)+\Rey^{-1}(\nabla \bfw, \nabla \bfv)+(C_S \delta)^r& ( \| \nabla \bfw  \|_F^s  \nabla \bfw, \nabla \bfv) \\
        -(p, \nabla \cdot \bfv) &= (\bff,\bfv), \:\forall \bfv \in X_s, \\
    (\nabla \cdot \bfw, q)&= 0, \: \forall q \in Q.
\end{aligned}
\end{align}

Since LM equation is an approximation to NSE, the error between these two equations is stated in the following lemma.
\begin{Lemma}[\cite{ReyesGSM} Continuous Error between NSE and LM] \label{NS proof lemma}
With $\bfu$ being smooth enough satisfying \eqref{var sol NS} and $\bfw$ satisfying \eqref{varLM}, we have constants $C_1,C_2 > 0$ such that
\begin{multline*}
    \vert \vert \bfu - \bfw \vert \vert^2 + \frac{\Rey^{-1}}{2} \int_0^t \vert \vert \nabla (\bfu-\bfw) \vert \vert^2 d \tau + C (C_S \delta)^r \int_0^t \vert \vert \nabla (\bfu-\bfw) \vert \vert_{s+2}^{s+2} d \tau \\
    \leq 
     C_2\exp\pare{C_1 \Rey^3 \int_0^{\tau} \norm{ \nabla \bfu}^4 ds } (C_S \delta)^{2r} \int_0^t \vert \vert \nabla \bfu \vert \vert^2 d \tau .
\end{multline*}
\end{Lemma}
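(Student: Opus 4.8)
The plan is to run the standard energy comparison between the regularized model and the NSE. First I would set $\bfe = \bfu - \bfw$ and observe that both $\bfu$ and $\bfw$ are pointwise divergence free and vanish on $\Gamma$, so $\bfe \in \bfV$; subtracting the weak form \eqref{varLM} from \eqref{var sol NS} and testing with $\bfv = \bfe$ eliminates both pressures and the common forcing. Writing the nonlinear difference as $\bfu\cdot\Grad\bfu - \bfw\cdot\Grad\bfw = \bfe\cdot\Grad\bfu + \bfw\cdot\Grad\bfe$ and using $b(\bfw,\bfe,\bfe)=0$ (since $\Grad\cdot\bfw=0$ and $\bfe|_\Gamma = \mathbf 0$) leaves
\[
\tfrac{1}{2}\tfrac{d}{dt}\norm{\bfe}^2 + \Rey^{-1}\norm{\Grad \bfe}^2 - (C_S\delta)^r(\norm{\Grad\bfw}_F^s\Grad\bfw,\Grad\bfe) = -b(\bfe,\bfu,\bfe).
\]

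Next I would handle the eddy-viscosity term by adding and subtracting the model term evaluated on the smooth field $\bfu$, writing $-(C_S\delta)^r(\norm{\Grad\bfw}_F^s\Grad\bfw,\Grad\bfe)$ as $(C_S\delta)^r(\norm{\Grad\bfw}_F^s\Grad\bfw - \norm{\Grad\bfu}_F^s\Grad\bfu,\Grad(\bfw-\bfu)) - (C_S\delta)^r(\norm{\Grad\bfu}_F^s\Grad\bfu,\Grad\bfe)$, using $\Grad\bfe = -\Grad(\bfw-\bfu)$. The first piece is bounded below by $C(C_S\delta)^r\norm{\Grad\bfe}_{s+2}^{s+2}$ through strong monotonicity (Lemma \ref{lemma1}, i.e.\ \eqref{StrongMonotonicty}) and is moved to the left-hand side, supplying the desired dissipation. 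The second piece is the consistency error, measuring the size of the artificial viscosity term on the NSE solution.

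I would then bound the two remaining right-hand side terms by Young's inequality. For the nonlinearity I would use the Ladyzhenskaya-type estimate $\abs{b(\bfe,\bfu,\bfe)} \leq C\norm{\Grad\bfu}\,\norm{\bfe}^{1/2}\norm{\Grad\bfe}^{3/2}$, then split with exponents $4/3$ and $4$ to absorb $\tfrac{\Rey^{-1}}{4}\norm{\Grad\bfe}^2$ and leave $C\Rey^3\norm{\Grad\bfu}^4\norm{\bfe}^2$; this term is exactly what generates the $\Rey^3\int\norm{\Grad\bfu}^4$ inside the exponential. For the consistency term I would apply Cauchy--Schwarz, use the smoothness hypothesis ($\Grad\bfu \in L^\infty$) to estimate $\norm{\,\norm{\Grad\bfu}_F^s\Grad\bfu\,}\leq \norm{\Grad\bfu}_\infty^s\norm{\Grad\bfu}$, and again use Young to absorb a second $\tfrac{\Rey^{-1}}{4}\norm{\Grad\bfe}^2$, leaving $C(C_S\delta)^{2r}\norm{\Grad\bfu}^2$ once the factors $\Rey$ and $\norm{\Grad\bfu}_\infty^{2s}$ are hidden in the constant $C_2$.

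Finally I would integrate over $[0,t]$, use $\bfe(0)=\mathbf 0$, and apply the continuous (integral) Gronwall inequality with kernel $C\Rey^3\norm{\Grad\bfu}^4$ to arrive at the stated estimate. I expect the main obstacle to be the joint treatment of the monotone eddy-viscosity term: one must use strong monotonicity to produce the $\norm{\Grad\bfe}_{s+2}^{s+2}$ dissipation while keeping the consistency term at the correct order in $(C_S\delta)^r$, and the $s$-dependent Frobenius weight forces reliance on the $L^\infty$ control of $\Grad\bfu$ to collapse the consistency term to the clean $(C_S\delta)^{2r}\norm{\Grad\bfu}^2$ form. Tracking the powers of $\Rey$ so that they land exactly in the exponent is the other delicate piece of bookkeeping.
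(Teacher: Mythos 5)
Your proposal is correct. Note that the paper does not prove Lemma \ref{NS proof lemma} itself --- it imports the result from \cite{ReyesGSM} --- but your argument (subtract the weak forms, test with $\bfe=\bfu-\bfw$ so the pressures drop, split the eddy-viscosity term into a monotone part handled by Lemma \ref{lemma1} plus a consistency term of size $(C_S\delta)^r\norm{\Grad\bfu}_\infty^{s}\norm{\Grad\bfu}$, absorb via Young with the $3/2$--power nonlinear bound producing the $\Rey^3\norm{\Grad\bfu}^4$ Gronwall kernel, then integrate and apply continuous Gronwall with $\bfe(0)=\mathbf{0}$) is exactly the standard route such a proof takes, and all the exponents and the placement of the powers of $\Rey$ and $(C_S\delta)^{r}$ check out against the stated estimate.
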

To conduct the numerical error estimate, we will assume that $(\bfw, P)$ satisfies the following regularity requirement:
\begin{equation}\label{1st:regularity}
\left\{
\begin{aligned}
&\bfw\in L^{\infty}(0,T;(L^2(\Omega))^d),\bfw\in H^{1}(0,T;(H^{k+1}(\Omega))^d),\\
&\;\bfw_{tt} \in L^2(0,T;(H^1(\Omega))^d),  \;\bfw_{ttt} \in L^2(0,T;(L^2(\Omega))^d),\\
&P\in L^{2}(0,T;H^k(\Omega)). 
\end{aligned}
\right.
\end{equation}

\begin{Theorem}[\cite{ReyesGSM} Error for SKEW LM model]
\label{LM SKEW Error}
If $ (\bfw, p)$ be the solution of the SKEW formulation of the LM satisfying no-slip boundary conditions and the regularity assumption \eqref{1st:regularity}. Let $\bfw_h^n , n = 0, 1, \dots , M-1 $ be the finite element solution given by \eqref{DivfreeFDLM} using $(P_k,P_{k-1}) (k \geq 2 )$ elements. Then, the numerical error satisfies 
\bea
\trinorm{ \bfw(T) - \bfw_h^M }_{\infty,0}^2 &+& \Rey^{-1}\delt \sum_{n=0}^{M-1} \left\|\Grad \left( \bfw(t\chalf) - \bfw\chalf_h \right)  \right\|^{2} \nonumber \\ 
&+&  C (C_S \delta)^r \delt \sum_{n=0}^{M-1} \norm{\Grad \left( \bfw(t\chalf) - \bfw\chalf_h \right) }_{s+2}^{s+2} \nonumber \\
& = & CKO ( h^{2k} + \delta^{2r} h^{2k} + (\Delta t)^4 + \delta^{2r} (\Delta t)^4   ),
\eea
where $ K = \exp{\pare{ \delt \sum_{n=0}^{l} \frac{\gamma_n}{1-\delt \gamma_n}}} $ and $ \gamma_n = C \pare{ \norm{\Grad \bfw\chalf}_{\infty} + \Rey \norm{\bfw\chalf}_{\infty}^2 }$
\end{Theorem}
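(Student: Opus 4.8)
The plan is to run a Crank--Nicolson midpoint energy argument, split the error through the Stokes projection, and close with the discrete Gronwall lemma. First I would evaluate the continuous LM variational form \eqref{varLM} at the midpoint time $t\chalf$ against a test function $\bfv_h\in\bfV_h$, and subtract the discrete scheme \eqref{DivfreeFDLM}. Writing the error as $\bfw^n-\bfw_h^n=\bfeta^n+\bfphi_h^n$ with $\bfeta^n=\bfw^n-{\rm P}_{st}\bfw^n$ and $\bfphi_h^n={\rm P}_{st}\bfw^n-\bfw_h^n\in\bfV_h$, I would then test the error equation with $\bfphi_h\chalf$. The definition of the Stokes projection \eqref{stpr} kills the viscous cross term $(\Grad\bfeta\chalf,\Grad\bfphi_h\chalf)$, and the pressure term vanishes because $\bfphi_h\chalf$ is discretely divergence free; the time-derivative term supplies the telescoping energy $\frac{1}{2\delt}(\norm{\bfphi_h^{n+1}}^2-\norm{\bfphi_h^n}^2)$ plus a consistency error that is $O((\delt)^4)$ by Lemma \ref{gradErr}.

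Second, I would estimate the two nonlinear contributions. For the Ladyzhenskaya term I would insert the projected intermediate $\norm{\Grad {\rm P}_{st}\bfw\chalf}_F^s\Grad {\rm P}_{st}\bfw\chalf$ so that the strong monotonicity of Lemma \ref{lemma1} produces exactly the positive dissipation $C(C_S\delta)^r\norm{\Grad\bfphi_h\chalf}_{s+2}^{s+2}$ on the left-hand side; the residual difference is controlled by the Lipschitz bounds \eqref{splus2bound} and \eqref{infbound2} of Lemma \ref{dufunzburgerlemma}, whose $(C_S\delta)^r$ prefactor, after Young's inequality, is what generates the $\delta^{2r}h^{2k}$ and $\delta^{2r}(\delt)^4$ pieces of the final rate. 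For the convective term I would expand $b^*(\bfw\chalf,\bfw\chalf,\bfphi_h\chalf)-b^*(\bfw\chalf_h,\bfw\chalf_h,\bfphi_h\chalf)$ and use the skew-symmetry $b^*(\cdot,\bfphi_h\chalf,\bfphi_h\chalf)=0$ to discard the most dangerous piece, leaving terms of the form $b^*(\bfeta\chalf,\bfw\chalf,\bfphi_h\chalf)$, $b^*(\bfphi_h\chalf,\bfw\chalf,\bfphi_h\chalf)$, and $b^*(\bfw\chalf_h,\bfeta\chalf,\bfphi_h\chalf)$. These are bounded by the standard trilinear estimates and Young's inequality, sending every factor of $\norm{\Grad\bfphi_h\chalf}$ into the $\frac{\Rey^{-1}}{2}\norm{\Grad\bfphi_h\chalf}^2$ dissipation. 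This absorption is the source of the Gronwall coefficient: bounding $b^*(\bfphi_h\chalf,\bfw\chalf,\bfphi_h\chalf)$ directly by $\norm{\Grad\bfw\chalf}_\infty\norm{\bfphi_h\chalf}^2$ yields the $\norm{\Grad\bfw\chalf}_\infty$ term, while the pieces where $\norm{\Grad\bfphi_h\chalf}$ must be absorbed into the viscous term produce the Reynolds-weighted $\Rey\norm{\bfw\chalf}_\infty^2$ term, so that together $\gamma_n=C(\norm{\Grad\bfw\chalf}_\infty+\Rey\norm{\bfw\chalf}_\infty^2)$.

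Finally, I would collect the remaining consistency errors --- the time difference versus $\bfw_t(t\chalf)$, the average-versus-midpoint gaps in the viscous, convective, and Ladyzhenskaya terms, and the pressure approximation --- and bound each by $(\delt)^4$ (respectively $\delta^{2r}(\delt)^4$) using Lemma \ref{gradErr}, while the projection terms in $\bfeta$ become $h^{2k}$ (respectively $\delta^{2r}h^{2k}$) via the Stokes error estimate of Lemma \ref{stokes_error_est} together with the approximation properties. After multiplying by $2\delt$, summing over $n=0,\dots,M-1$, and checking $\delt\,d_n<1$, the discrete Gronwall Lemma \ref{discreteGronwall} closes the estimate and produces the stated factor $K=\exp\pare{\delt\sum_{n=0}^{l}\frac{\gamma_n}{1-\delt\gamma_n}}$; a triangle inequality recombining $\bfeta$ and $\bfphi_h$ then yields the theorem.

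I expect the main obstacle to be the simultaneous, balanced treatment of the monotone Ladyzhenskaya term and the convective term: one must retain both positive contributions $\Rey^{-1}\norm{\Grad\bfphi_h\chalf}^2$ and $(C_S\delta)^r\norm{\Grad\bfphi_h\chalf}_{s+2}^{s+2}$ strictly on the left, choose the Young's-inequality weights so that every error cross term is dominated without spoiling these, and at the same time track the $\delta$- and $h$-powers carefully enough that the four-term rate $h^{2k}+\delta^{2r}h^{2k}+(\delt)^4+\delta^{2r}(\delt)^4$ emerges cleanly. The Reynolds dependence sitting in $\gamma_n$ is an intrinsic feature of the skew-symmetric handling here --- precisely the weakness that the EMAC formulation, via the identity \eqref{eq: emac_id} and the bounds of Lemma \ref{C infty bound}, is designed to remove.
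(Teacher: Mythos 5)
Your proposal is correct and follows essentially the same route as the paper's argument: the theorem itself is quoted from \cite{ReyesGSM}, but the proof given in the paper for the EMAC analogue (Theorem \ref{LM EMAC Error}) uses exactly your structure --- midpoint consistency error $D(\bfw,\bfv_h)$, Stokes-projection splitting $\bfe^n=\bfeta^n-\bfphi_h^n$, insertion of the projected intermediate so that strong monotonicity (Lemma \ref{lemma1}) yields the $(C_S\delta)^r\norm{\Grad\bfphi_h\chalf}_{s+2}^{s+2}$ dissipation, Lipschitz bounds (Lemma \ref{dufunzburgerlemma}) for the residual, and the discrete Gronwall lemma --- and your identification of where $\gamma_n=C(\norm{\Grad\bfw\chalf}_{\infty}+\Rey\norm{\bfw\chalf}_{\infty}^2)$ arises in the skew-symmetric case is the right one. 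The only slip is the claim that the pressure term ``vanishes'': since $\bfphi_h\chalf$ is only discretely divergence-free, the term $\left(P(t\chalf)-q_h,\Grad\cdot\bfphi_h\chalf\right)$ survives and must be bounded by the pressure best-approximation error, which you do in fact account for later.
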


\begin{Theorem}[ Error for EMAC LM model]
\label{LM EMAC Error}
If $ (\bfw, P)$ be the solution of the EMAC formulation of the LM satisfying no-slip boundary conditions and the regularity assumption \eqref{1st:regularity}. Let $\bfw_h^n , n = 0, 1, \dots , M-1 $ be the finite element solution given by \eqref{DivfreeFDLM_EMAC} using $(P_k,P_{k-1}) (k \geq 2 )$ elements. Then, the numerical error satisfies 
\bea
\trinorm{ \bfw(T) - \bfw_h^M }_{\infty,0}^2 &+& \Rey^{-1}\delt \sum_{n=0}^{M-1} \left\|\Grad \left( \bfw(t\chalf) - \bfw\chalf_h \right)  \right\|^{2} \nonumber \\ 
&+&  C (C_S \delta)^r \delt \sum_{n=0}^{M-1} \norm{\Grad \left( \bfw(t\chalf) - \bfw\chalf_h \right) }_{s+2}^{s+2} \nonumber \\
& = & CKO ( h^{2k} + \delta^{2r} h^{2k} + (\Delta t)^4 + \delta^{2r} (\Delta t)^4   ),
\eea
where $ K = \exp{\pare{ \delt \sum_{n=0}^{l} \frac{\gamma_n}{1-\delt \gamma_n}}} $ and $ \gamma_n = C \pare{\norm{\Grad\bfw\chalf}_{\infty}+1}$ 
\end{Theorem}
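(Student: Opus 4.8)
The plan is to follow the same energy-argument framework as the SKEW estimate of Theorem~\ref{LM SKEW Error}, with the skew form $b^*$ replaced by the EMAC form $c$; the decisive difference appears only in the treatment of the nonlinear term. First I would decompose the error through the Stokes projection \eqref{stpr}, writing $\bfeta = \bfw - {\rm P}_{st}\bfw$ and $\bfphi_h = \bfw_h - {\rm P}_{st}\bfw$, so that $\bfw - \bfw_h = \bfeta - \bfphi_h$ with $\bfphi_h \in \bfV_h$. Since the exact solution $\bfw$ is divergence free, the EMAC nonlinearity and the convective nonlinearity in \eqref{varLM} agree on $\bfV_h$ up to the gradient term absorbed into the modified pressure $P = p - \tfrac12|\bfw|^2$ of \eqref{eq: emacform}; hence $\bfw$ satisfies the weak EMAC equation with $c(\bfw,\bfw,\cdot)$. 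Evaluating this equation at $t\chalf$, subtracting \eqref{DivfreeFDLM_EMAC}, and testing with $\bfv_h = \bfphi_h\chalf$ produces, on the left, the telescoping term $\tfrac{1}{2\delt}(\norm{\bfphi_h^{n+1}}^2 - \norm{\bfphi_h^{n}}^2)$, the viscous coercive term $\Rey\inv\norm{\Grad\bfphi_h\chalf}^2$, and, via strong monotonicity (Lemma~\ref{lemma1}) applied to the Ladyzhenskaya diffusion, the coercive term $C(C_S\delta)^r\norm{\Grad\bfphi_h\chalf}_{s+2}^{s+2}$. The Stokes projection property annihilates $(\Grad\bfeta\chalf,\Grad\bfphi_h\chalf)$.

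Next I would dispatch the linear and consistency contributions on the right, handled exactly as in the SKEW analysis, which generate the advertised $h^{2k}$, $\delta^{2r}h^{2k}$, $(\delt)^4$, and $\delta^{2r}(\delt)^4$ terms. The Crank--Nicolson temporal errors from replacing $\bfw_t(t\chalf)$ and $\bfw(t\chalf)$ by $\tfrac{\bfw^{n+1}-\bfw^n}{\delt}$ and $\bfw\chalf$ are controlled by Lemma~\ref{gradErr}, producing the $(\delt)^4$ terms. The Ladyzhenskaya diffusion difference is controlled by the Lipschitz continuity Lemma~\ref{dufunzburgerlemma}, which together with the Stokes error estimate Lemma~\ref{stokes_error_est} yields the $\delta^{2r}h^{2k}$ and $\delta^{2r}(\delt)^4$ contributions. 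The pressure enters only as $(P - q_h,\Grad\cdot\bfphi_h\chalf)$ for arbitrary $q_h \in Q_h$ (the $Q_h$-part vanishing on $\bfV_h$), bounded by the best approximation of $P$. All of these are absorbed with Young's inequality into the coercive terms above.

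The crux is the nonlinear difference $c(\bfw,\bfw,\bfphi_h\chalf) - c(\bfw_h\chalf,\bfw_h\chalf,\bfphi_h\chalf)$, which I would organize around the exact velocity (all superscripts $n+\tfrac12$ suppressed, $\bfe = \bfeta - \bfphi_h$): $c(\bfw,\bfw,\bfphi_h) - c(\bfw_h,\bfw_h,\bfphi_h) = c(\bfe,\bfw,\bfphi_h) + c(\bfw,\bfe,\bfphi_h) - c(\bfe,\bfe,\bfphi_h)$. Expanding $\bfe = \bfeta - \bfphi_h$, every resulting term is of exactly one of three types: (i) the two \emph{critical} terms $c(\bfphi_h,\bfw,\bfphi_h)$ and $c(\bfw,\bfphi_h,\bfphi_h)$; (ii) the vanishing term $c(\bfphi_h,\bfphi_h,\bfphi_h) = 0$ from identity \eqref{eq: emac_id}; and (iii) terms carrying at least one factor of $\bfeta$, which are approximation terms estimated by Lemma~\ref{C_TRIL} and Lemma~\ref{stokes_error_est} and absorbed as in the previous paragraph. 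For the critical terms I would use \eqref{eq: emac_id} to write $c(\bfphi_h,\bfw,\bfphi_h) = -\tfrac12 c(\bfw,\bfphi_h,\bfphi_h)$, placing the exact velocity in the slot that carries the $L^\infty$ norm, and then apply \eqref{eq: C infbound 1} to obtain $\abs{c(\bfphi_h,\bfw,\bfphi_h)} + \abs{c(\bfw,\bfphi_h,\bfphi_h)} \le C\norm{\Grad\bfw\chalf}_{\infty}\norm{\bfphi_h\chalf}^2$.

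The essential observation, and the entire source of the improvement over SKEW, is that these bounds contain no factor of $\Grad\bfphi_h\chalf$: there is therefore no need to absorb them into $\Rey\inv\norm{\Grad\bfphi_h\chalf}^2$ through Young's inequality, and no power of $\Rey$ is produced. In the SKEW case the analogous term $b^*(\bfphi_h,\bfw,\bfphi_h)$ cannot be rewritten to shed its $\Grad\bfphi_h$ factor, and the ensuing Young step forces the $\Rey\norm{\bfw\chalf}_\infty^2$ contribution into the Gronwall constant. Moreover, because the decomposition is centered on $\bfw$, no $\bfw_h\chalf$ survives in the critical terms, so no stability or inverse-inequality bound on the discrete solution is needed there. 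Collecting everything, multiplying by $2\delt$ and summing from $n=0$ to $M-1$, the nonlinear contribution appears as $\delt\sum_n \gamma_n \norm{\bfphi_h\chalf}^2$ with $\gamma_n = C(\norm{\Grad\bfw\chalf}_\infty + 1)$, the ``$+1$'' absorbing the lower-order benign factors (such as powers of $\norm{\bfeta}_\infty$). Applying the discrete Gronwall Lemma~\ref{discreteGronwall} under $\delt\gamma_n < 1$ then bounds $\trinorm{\bfphi_h}_{\infty,0}^2$ together with the two accumulated coercive norms by $CK$ times the consistency data, with the Reynolds-independent factor $K = \exp\pare{\delt\sum_n \tfrac{\gamma_n}{1-\delt\gamma_n}}$. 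A final triangle inequality recombining $\bfphi_h$ with $\bfeta$ (Lemma~\ref{stokes_error_est}) returns the stated estimate. The main obstacle is the nonlinear bookkeeping above, namely confirming that after the $\bfw$-centered split \emph{every} dangerous midpoint product either vanishes or can be routed through \eqref{eq: emac_id} so that no $\Grad\bfphi_h$ factor remains, since this is precisely what delivers the Reynolds-free Gronwall constant.
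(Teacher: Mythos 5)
Your proposal is correct and follows essentially the same route as the paper: Stokes-projection error splitting, Crank--Nicolson consistency via Lemma~\ref{gradErr}, strong monotonicity and Lipschitz continuity for the Ladyzhenskaya term, and --- the decisive step --- the EMAC identity \eqref{eq: emac_id} combined with the $L^\infty$ bound \eqref{eq: C infbound 1} so that the critical quadratic-in-$\bfphi_h$ terms carry $\norm{\Grad\bfw\chalf}_\infty\norm{\bfphi_h\chalf}^2$ with no $\Grad\bfphi_h\chalf$ factor and hence no Reynolds number in the Gronwall constant. The only difference is bookkeeping: you center the nonlinear splitting on $\bfw$ (picking up an extra $c(\bfe,\bfe,\bfphi_h)$ term that you correctly route into the Gronwall constant via the $W^{1,\infty}$ stability of the projection), whereas the paper writes $c(\bfe,\bfw,\cdot)+c(\bfw_h,\bfe,\cdot)$ and inserts $\pm\, c(P_{st}(\bfw\chalf),\cdot,\cdot)$ terms to reach the same critical bound.
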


\begin{Remark}
    Comparing both Theorems \ref{LM SKEW Error} and \ref{LM EMAC Error} the Key improvement is seen in the Gronwall constant K. In particular for the EMAC formulation we note no explicit dependence on Reynolds numbers. In contrast we see for the SKEW formulation under the same smoothness assumptions on the true solution we have a Gronwall constant of $ \gamma_n = O( \Rey ) $. This suggests that the EMAC-LM has better longer time accuracy than the corresponding SKEW-LM. 
\end{Remark}

\begin{proof}

We begin by evaluating the weak formulation of the LM \eqref{varLM} at $t = t\chalf$ to get
\bea
  \left(\frac{\bfw^{n+1}-\bfw^n}{\delt}, \bfv_h\right) +  \Rey^{-1} \left(\Grad  \bfw\chalf,\Grad \bfv_h\right)  +  c \left( \bfw\chalf, \bfw\chalf , \bfv_h\right) \nonumber \\ 
 - \left( P(t\chalf) , \Grad \cdot \bfv_h \right) + (C_S \delta)^r \left( \norm{ \Grad \bfw\chalf }_F^s \Grad \bfw\chalf , \Grad \bfv_h  \right) \nonumber  \\
 = \left(\bff (t\chalf), \bfv_h\right) \, + \, D\left(\bfw, \bfv_h\right), \quad \forall \bfv_{h} \in V_{h} ,  \label{TRUE} 
\eea
where
\begin{align}
 &D(\bfw,\bfv_h) = \Rey^{-1}  \left(\Grad  \left(\bfw\chalf - \bfw (t\chalf) \right) ,\Grad \bfv_h\right)  + c \left( \bfw\chalf , \bfw\chalf , \bfv_h\right) \nonumber \\
 +& \left(\frac{\bfw^{n+1} - \bfw^{n}}{\delt} - \bfw_{t}(t\chalf),\bfv_h\right) - c \left(\bfw(t\chalf), \bfw ( t\chalf ), \bfv_h\right)    \nonumber \\
 +& (C_S \delta)^r \left( \norm{ \Grad \bfw\chalf }_F^s \Grad \bfw\chalf - \norm{ \Grad \bfw (t\chalf) }_F^s \Grad \bfw (t\chalf)  , \Grad \bfv_h  \right).
 \label{TAIL}
\end{align}
Denote $\bfe^{n} = \bfw^n - \bfw^{n}_{h}$ and subtract \eqref{DivfreeFDLM_EMAC} from \eqref{TRUE} to get the error equation
\bea
\frac{1}{\delt} \left( \bfe^{n+1} - \bfe^{n}, \bfv_h\right) + c \left( \bfe\chalf, \bfw\chalf , \bfv_h\right)  +  c \left( \bfw\chalf_h, \bfe\chalf  ,\bfv_h\right)  \nonumber \\
 \textstyle
 + \Rey^{-1} \left(\Grad \bfe\chalf, \Grad \bfv_h\right) + (C_S \delta)^r \pare{ \norm{ \nabla\bfw\chalf }_F^s  \nabla\bfw\chalf , \nabla \bfv_h } \nonumber \\
  -  (C_S \delta)^r \left( \norm{ \Grad\bfw\chalf_h }_F^s  \nabla\bfw\chalf_h , \nabla \bfv_h \right)  = \left( P(t\chalf) , \Grad \cdot \bfv_h \right) + D\left(\bfw, \bfv_h\right) . \: \label{eq:error37}
\eea
Next, we decompose the velocity error as 
\beas
 \bfe^{n} \, = \, \left(\bfw^n - \P_{st} \left(\bfw^n\right)\right) \, - \, \left(\bfw_h^n - \P_{st} \left(\bfw^{n}\right)\right) \, = \, \bfeta^{n} - \bfphi^{n}_{h},
\eeas
where $ P_{st} $ is the Stokes projection given by $ (\Grad \P_{st}(\bfw) - \Grad \bfw,\Grad \bfv) =0$, \: $\forall \bfv \in V_h$. With the choice  $\bfv_{h} = \bfphi\chalf_h$,  and using $ (q_h, \Grad \cdot \bfphi_h\chalf) =0, \: \forall  q_h \in Q_h, $ we get the following error equation
\bea
  \frac{\norm{ \bfphi^{n+1}_h }^2 - \norm{ \bfphi^{n}_h }^{2} }{2\delt}  +  \Rey^{-1} \left\|\Grad \bfphi\chalf_h\right\|^{2} +  (C_S \delta)^r \pare{ \norm{ \Grad\bfw\chalf_h }_F^s  \nabla\bfw\chalf_h , \nabla \bfphi\chalf_h}\nonumber \\
 =  \left( \frac{\bfeta^{n+1} - \bfeta^{n}}{\Delta t}, \bfphi\chalf_h\right)  + (C_S \delta)^r ( \Vert \nabla\bfw\chalf \Vert_F^s  \nabla\bfw\chalf , \nabla \bfphi\chalf_h ) \nonumber \\
   - c \left( \bfeta\chalf , \bfw\chalf , \bfphi\chalf_h\right)  - c \left( \bfw\chalf_h , \bfeta\chalf, \bfphi\chalf_h\right) \,
 + c \left(\bfphi\chalf_h , \bfw\chalf , \bfphi\chalf_h\right)
  \nonumber \\ 
  + c \pare{ \bfw\chalf_h, \bfphi\chalf_h, \bfphi\chalf_h} - \left( P(t\chalf) - q_h, \Grad \cdot \bfphi\chalf_h \right)  - \, D \left(\bfw, \bfphi\chalf_h\right). 
\eea
Note that $ \Rey\inv \left(\Grad \bfeta\chalf, \Grad \bfphi\chalf_h\right) =0 $ by definition of the projection $ P_{st}$.

In contrast from the SKEW formulation, the term $\textstyle c(\bfw\chalf_h, \bfphi\chalf_h, \bfphi\chalf_h) \neq 0 $. We subtract the term $(C_S \delta)^r ( \Vert \nabla \P_{st}(\bfw\chalf) \Vert_F^s  \nabla \P_{st}(\bfw\chalf) , \nabla \bfphi_h\chalf)$ 
from both sides as well as inserting $ \pm \; c(P_{st}(\bfw\chalf),\bfphi\chalf_h, \bfphi\chalf_h)$ and\\ $\pm c(P_{st}(\bfw\chalf),\bfeta\chalf ,\phi_h\chalf )$ on the RHS and simplifiing gives
\bea
\label{EQ5}
 & &\frac{ \left\|\bfphi^{n+1}_h \right\|^{2} - \left\|\bfphi^{n}_h\right\|^{2}}{2\delt} +  \Rey^{-1} \left\|\Grad \bfphi\chalf_h \right\|^{2}  +  (C_S \delta)^r \pare{ \Vert \nabla \bfw\chalf_h \Vert_F^s  \nabla \bfw\chalf_h , \nabla \bfphi\chalf_h } \nonumber \\ 
 & &- (C_S \delta)^r \pare{ \Vert \nabla P_{st}(\bfw\chalf) \Vert_F^s  \nabla P_{st}(\bfw\chalf) , \nabla \bfphi\chalf_h}
 =  \left( \frac{\bfeta^{n+1} - \bfeta^{n}}{\Delta t}, \bfphi\chalf_h\right)   \nonumber \\
 &  & -  c \pare{ \bfeta\chalf,\bfw\chalf, \phi_h\chalf }
 + c \pare{ \bfphi\chalf_h , P_{st}(\bfw\chalf) , \bfphi\chalf_h } + c \pare{ \bfphi\chalf_h,\bfphi\chalf_h,\bfphi\chalf_h}
  \nonumber \\
 & & -c \pare{P_{st}(\bfw\chalf),\bfeta\chalf,\bfphi_h\chalf}  + c\pare{ P_{st}(\bfw\chalf) ,\bfphi\chalf_h,\bfphi\chalf_h } \nonumber \\
 & &+ (C_S \delta)^r \pare{ \Vert \nabla \bfw\chalf \Vert_F^s  \nabla \bfw\chalf , \nabla \bfphi\chalf_h } - \left( P(t\chalf) - q_h, \Grad \cdot \bfphi\chalf_h \right) \nonumber\\
 & &-  (C_S \delta)^r \pare{ \Vert \nabla P_{st}(\bfw\chalf) \Vert_F^s  \nabla P_{st}(\bfw\chalf) , \nabla \bfphi\chalf_h } -  D \left( \bfw, \bfphi\chalf_h \right) .
\eea
Based on Lemma \ref{lemma1}
\bea
 (C_S \delta)^r ( \Vert \nabla \bfw\chalf_h \Vert_F^s  \nabla \bfw\chalf_h , \nabla \bfphi\chalf_h ) - (C_S \delta)^r ( \Vert \nabla P_{st}(\bfw\chalf) \Vert_F^s  \nabla P_{st}(\bfw\chalf) , \nabla \bfphi\chalf_h)  \nonumber \\ 
 \geq C (C_S \delta)^r \norm{ \Grad (\bfw\chalf_h - P_{st}(\bfw\chalf))}_{s+2}^{s+2}  = C (C_S \delta)^r \norm{\Grad \bfphi\chalf_h}_{s+2}^{s+2}.
\eea
 Rewriting \eqref{EQ5} noting $c(\bfu,\bfu,\bfu) =0 $ and using \eqref{eq: emac_id} gives
\begin{flalign}
\label{EQ6}
&  \frac{1}{2 \delt} \left(\left\|\bfphi^{n+1}_h\right\|^{2} - \left\|\bfphi^{n}_h\right\|^{2}\right) +  \Rey^{-1} \left\|\Grad \bfphi\chalf_h\right\|^{2}  + C (C_S \delta)^r \norm{\Grad \bfphi_h\chalf}_{s+2}^{s+2} \qquad \qquad
  \nonumber &&\\
 &\leq \abs{ c \left( \bfeta\chalf , \bfw\chalf , \bfphi\chalf_h\right) }
 + \abs{  c \left( P_{st}(\bfw\chalf) , \bfeta\chalf, \bfphi\chalf_h\right) } \nonumber &&\\
& + \frac{1}{2}\abs{ c \pare{ P_{st}( \bfw\chalf) ,\bfphi\chalf_h,\bfphi\chalf_h}} + \abs{ \pare{ \frac{\bfeta^{n+1}-\bfeta^{n}}{\delt} ,\bfphi\chalf_h } } \nonumber &&\\
& + \Big| (C_S \delta)^r ( \Vert \nabla \bfw\chalf \Vert_F^s  \nabla \bfw\chalf , \nabla \bfphi\chalf_h ) \nonumber &&\\
& -  (C_S \delta)^r ( \Vert \nabla P_{st}(\bfw\chalf) \Vert_F^s \nabla P_{st}(\bfw\chalf) , \nabla \bfphi\chalf_h) \Big|  \nonumber \\
& + \abs{ \left( P(t\chalf) - q_h, \Grad \cdot \bfphi\chalf_h \right) }  + \,\abs{ D \left(\bfw, \bfphi\chalf_h\right)}. &&
\end{flalign}
Next we bound each term on the right hand side using Lemmas \ref{C_TRIL}, \ref{C infty bound} , Poincar\'{e} inequality, Cauchy-Schwartz inequality and Young's inequality.

\bea\label{Bound1}
     \pare{ \frac{\bfeta^{n+1} - \bfeta^{n}}{\Delta t}, \bfphi\chalf_h} &\leq&\frac{1}{4}\norm{\frac{\bfeta^{n+1}-\bfeta^{n}}{\delt}}^2+\norm{\bfphi_h\chalf}^2\nonumber\\
    &\leq&\frac{1}{4}\int_\Omega\pare{\frac{1}{\delt}\int_{t^{n}}^{t^{n+1}}|\bfeta_t|dt}^2dx+\norm{\bfphi_h\chalf}^2\nonumber\\
    &\leq&\frac{1}{4\delt} \int_{t^{n}}^{t^{n+1}} \norm{\bfeta_t}^2 dt + \norm{\bfphi_h\chalf}^2.
\eea

\bea
 \abs{\left(P(t\chalf) - q_{h}, \Grad \cdot \bfphi\chalf_h\right)  }\leq  \frac{\Rey\inv}{16} \left\|\Grad \bfphi\chalf_h \right\|^{2}  +  C\Rey \inf\limits_{  q_{h} \in Q_{h}} \norm{P - q_h}^2, \label{BOUND2}
\eea
\bea
&  & \abs{c \left( \bfeta\chalf , \bfw\chalf , \bfphi\chalf_h \right)}
\leq  \norm{\Grad \bfeta\chalf} \norm{\bfw\chalf}^{\frac12} \norm{\Grad\bfw\chalf}^{\frac12} \norm{\Grad \bfphi\chalf_h} \nonumber \\
&\leq& \frac{\Rey\inv}{16} \norm{ \Grad \bfphi\chalf_h }^2 + C\Rey \norm{\bfw\chalf} \norm{\Grad \bfw\chalf} \norm{\Grad \bfeta \chalf}^2 ,\label{Bound3} \qquad 
\eea
\bea
\abs{c \left( P_{st}(\bfw\chalf) , \bfeta\chalf , \bfphi\chalf_h \right)}
\leq C \norm{  \Grad P_{st}(\bfw\chalf)} \norm{ \bfeta\chalf}^{\frac12} \norm{\Grad \bfeta\chalf}^{\frac12} \norm{\Grad \bfphi\chalf_h } \nonumber \\
\leq  \frac{\Rey\inv}{16} \norm{ \Grad \bfphi\chalf_h }^2 + C \Rey \norm{\Grad \bfw\chalf}^2 \norm{\bfeta\chalf}  \norm{\Grad \bfeta\chalf},\label{Bound4} 
\qquad
\eea
and
\bea
\abs{ c \pare{ P_{st}( \bfw\chalf) ,\bfphi\chalf_h,\bfphi\chalf_h}} 
&\leq& C \norm{ \Grad P_{st}(\bfw\chalf) }_{\infty} \norm{\bfphi\chalf_h}^2\nonumber \\
&\leq& C \norm{ \Grad \bfw\chalf }_{\infty} \norm{\bfphi\chalf_h}^2. \label{Bound6}
\qquad
\eea

The Smagorinsky term is bounded using Lemma \ref{dufunzburgerlemma}
\bea
\abs{ (C_S \delta)^r ( \Vert \nabla \bfw\chalf \Vert_F^s  \nabla \bfw\chalf , \nabla \bfphi_h\chalf ) -  (C_S \delta)^r ( \Vert \nabla P_{st}(\bfw\chalf) \Vert_F^s  \nabla P_{st}(\bfw\chalf) , \nabla \bfphi\chalf_h) }\nonumber\\
\leq C (C_S \delta)^r \norm{\Grad \bfeta\chalf} \norm{\Grad \bfphi\chalf_h}
\leq  \frac{\Rey\inv}{16} \left\|\Grad \bfphi\chalf_h\right\|^{2} +  (C (C_S \delta)^r)^2 \Rey \left\|\Grad \bfeta\chalf\right\|^{2}, \label{Bound7} \qquad
\eea

where we note that  C depends on $\norm{\Grad \bfw}_{\infty}$, i.e., $\bfw \in W^{1,\infty}$.

Combining \eqref{Bound1} - \eqref{Bound7} results in
\bea
& &\frac{1}{2 \delt} \left(\left\|\bfphi^{n+1}_h\right\|^{2} - \left\|\bfphi^{n}_h\right\|^{2}\right) +  \frac{3\Rey^{-1}}{4} \left\|\Grad \bfphi\chalf_h\right\|^{2} + C (C_S \delta)^r \norm{\Grad \bfphi\chalf_h}_{s+2}^{s+2} \nonumber \\
&\leq&  C \pare{\norm{\Grad \bfw\chalf}_{\infty}+1}\norm{\bfphi\chalf_h}^2  + C \Rey \norm{\bfw\chalf} \norm{\Grad \bfw\chalf} \norm{\Grad \bfeta\chalf}^2 \nonumber \\
&+&  \, C \Rey \inf\limits_{q^{h} \in Q_{h}} \norm{P - q_h }^{2} + C \Rey \norm{\bfeta\chalf} \norm{\Grad \bfeta\chalf} \norm{\Grad \bfw\chalf}^2\nonumber \\
&+&  \frac{C}{\delt} \int_{t^n}^{t^{n+1}} \norm{ \bfeta_t}^{2} \; dt + \Rey C C_S^2 \delta^{2r} \norm{ \Grad \bfeta\chalf}^2 + \abs{D \left(\bfw, \bfphi\chalf_h\right)} .\qquad
\eea

We use Lemmas \ref{C_TRIL}, \ref{dufunzburgerlemma}, Appendix of \cite{ervin2007numerical} , Poincar\'{e} inequality, Cauchy-Schwartz inequality and Young's inequality to bound the tail $D \left(\bfw, \bfphi_{h}^{n+\frac{1}{2}}\right)$. \\
\bea
& & \left(\frac{\bfw(t^{n+1}) - \bfw(t^{n})}{\delt} - \bfw_{t}(t\chalf), \bfphi\chalf_h\right) \nonumber \\
 & \leq & \frac{\Rey\inv}{24} \left\|\Grad \bfphi\chalf_h \right\|^{2} \, +C \Rey  \left\|\frac{\bfw(t^{n+1}) - \bfw(t^{n})}{\delt} - \bfw_{t}(t\chalf)\right\|^{2} \nonumber \\
 & \leq & \frac{\Rey\inv}{24} \left\|\Grad \bfphi\chalf_h\right\|^{2} \, + C \Rey  \delt^{3} \int_{t^{n}}^{t^{n+1}} \left\|\bfw_{ttt} \right\|^{2} dt \label{BOUND10} \;,
\eea
\bea
& &\Rey^{-1}  \left(\Grad  \left(\bfw\chalf - \bfw (t\chalf) \right) ,\Grad \bfphi\chalf_h\right) \nonumber \\
&\leq& \frac{\Rey\inv}{24} \left\|\Grad \bfphi\chalf_h\right\|^{2} \, + C\Rey^{-1} \left\|\Grad \left( \bfw\chalf - \bfw(t\chalf)\right)\right\|^{2} \nonumber \\
 &\leq&  \frac{\Rey\inv}{24} \left\|\Grad \bfphi\chalf_h\right\|^{2} \, +  C \Rey^{-1} \delt^{3} \int_{t^{n}}^{t^{n+1}} \left\|\Grad \bfw_{tt}\right\|^{2} dt, \qquad \quad  \label{BOUND11}
\eea
and
\hspace{-2em}
\bea
& & c \left( \bfw\chalf , \bfw\chalf , \bfphi\chalf_h\right)  -  c \left(\bfw(t\chalf), \bfw ( t\chalf ), \bfphi\chalf_h\right) \nonumber \\
&=& c \left( \bfw\chalf - \bfw(t\chalf) , \bfw\chalf , \bfphi\chalf_h\right)  -  c \left(\bfw(t\chalf), \bfw\chalf  - \bfw(t\chalf), \bfphi\chalf_h\right) \nonumber \\
&\leq& C \norm{\Grad (\bfw\chalf - \bfw(t\chalf))} \norm{ \Grad \bfphi\chalf_h}\left(\norm{\Grad \bfw\chalf } + \norm{\Grad \bfw(t\chalf)}\right) \nonumber \\
&\leq& C \Rey \left( \norm{ \Grad \bfw\chalf}^2 + \norm{\Grad \bfw(t\chalf) }^2 \right) \frac{\delt^3}{48} \int_{t_n}^{t^{n+1}} \norm{\Grad \bfw_{tt}}^2 dt + \frac{\Rey^{-1}}{8} \norm{ \Grad \bfphi\chalf_h }^2 \nonumber \\
& \leq & C \Rey {\delt^3} \left( \int_{t_n}^{t^{n+1}} \norm{\Grad \bfw\chalf}^4  + \norm{\Grad \bfw_{tt}}^4 + \norm{ \Grad \bfw(t\chalf) }^4  \; dt \right) \nonumber \\
& &+ \frac{\Rey^{-1}}{8} \norm{ \Grad \bfphi\chalf_h }^2 \nonumber \\
& \leq & C \Rey {\delt^4} \left( \norm{ \Grad \bfw\chalf }^4 + \norm{ \bfw(t\chalf) }^4 \right) + C \Rey \delt^3  \int_{t_n}^{t^{n+1}} \norm{\bfw_{tt}}^4 \; dt \nonumber \\
& &+  \frac{\Rey^{-1}}{8} \norm{ \Grad \bfphi\chalf_h }^2. \qquad 
\eea

For the Smagorinsky term we use Lemma \ref{dufunzburgerlemma} along with Young's inequality
\bea
 & & (C_S \delta)^r \left( \norm{ \Grad \bfw\chalf }_F^s \Grad \bfw\chalf - \norm{ \Grad \bfw\chalf }_F^s \Grad \bfw (t\chalf)  , \Grad \bfphi\chalf_h  \right) \nonumber \\
 & \leq & C (C_S \delta)^r  \norm{\Grad\big(\bfw\chalf - \bfw(t\chalf)  \big)} \norm{\Grad\bfphi\chalf_h} \nonumber \\
 & \leq & \frac{\Rey\inv}{24} \norm{\Grad \bfphi\chalf_h}^2 + C  C_S^2  \delta^{2r} \Rey  \norm{ \Grad\big( \bfw\chalf - \bfw (t\chalf) \big) }^2 \nonumber \\
 & \leq & \frac{\Rey\inv}{24} \norm{\Grad \bfphi\chalf_h}^2 + C C_S^2 \delta^{2r}\Rey  \delt^3 \int_{t_n}^{t^{n+1}}  \norm{ \Grad \bfw_{tt} }^2 \; dt.
\eea
Where C  depends on $ \norm{\Grad \bfw}_\infty$. Combining the above bounds we arrive at
\bea
& &\frac{1}{2 \delt} \left(\left\|\bfphi^{n+1}_h\right\|^{2} - \left\|\bfphi^{n}_h\right\|^{2}\right) +  \frac{\Rey^{-1}}{2} \left\|\Grad \bfphi\chalf_h\right\|^{2} + C (C_S \delta)^r \norm{\Grad \bfphi^{n+ \frac{1}{2}}_h}_{s+2}^{s+2} \nonumber \\
&\leq&  C \pare{\norm{\Grad \bfw\chalf}_{\infty}+1}\norm{\bfphi\chalf_h}^2  + C \Rey \norm{\bfw\chalf} \norm{\Grad \bfw\chalf} \norm{\Grad \bfeta\chalf}^2 \nonumber \\
&+&  \, C \Rey \inf\limits_{q_{h} \in Q_{h}} \left\|P - q_{h}\right\|^{2} + C \Rey \norm{\bfeta\chalf} \norm{\Grad \bfeta\chalf} \norm{\Grad \bfw\chalf}^2 \nonumber \\
&+&  \frac{C}{\delt} \int_{t^n}^{t^{n+1}} \norm{ \bfeta_t}^{2} \; dt  +  \Rey C (C_S \delta)^{2r} \norm{ \Grad \bfeta\chalf}^2  +  C \Rey \delt^{3} \int_{t^{n}}^{t^{n+1}} \left\|\bfw_{ttt} \right\|^{2} dt \nonumber   \\
&+&  C \Rey^{-1} \delt^{3} \int_{t^{n}}^{t^{n+1}} \left\|\Grad \bfw_{tt}\right\|^{2} dt + C \Rey {\delt^4} \left( \norm{ \Grad \bfw\chalf }^4 + \norm{ \bfw\chalf}^4 \right) \nonumber \\
&+& C \Rey \delt^3  \int_{t_n}^{t^{n+1}} \norm{\bfw_{tt}}^4 \; dt + C \Rey (C_S\delta)^{2r} \delt^3 \int_{t_n}^{t^{n+1}}  \norm{ \Grad \bfw_{tt} }^2 \; dt.
\eea
Next, we sum over time steps and multiply by $2 \Delta t$, 
\bea
& &\left\| \bfphi_h^M \right\|^2 + \Rey^{-1}\delt \sum_{n=0}^{M-1} \left\|\Grad \bfphi\chalf_h\right\|^{2}
+  C (C_S \delta)^r \delt\sum_{n=0}^{M-1} \norm{\Grad \bfphi_h\chalf}_{s+2}^{s+2} \nonumber  \nonumber \\
&\leq& 
 C \delt\sum_{n=0}^{M-1}\pare{ \norm{\Grad \bfw\chalf}_{\infty} +1}\norm{\bfphi\chalf_h}^2 +  C \Rey \delt\sum_{n=0}^{M-1} \inf\limits_{q_{h} \in Q_{h}} \left\|P - q_{h}\right\|^{2}  \nonumber \\
&+& C \Rey \delt\sum_{n=0}^{M-1} \norm{\bfw\chalf} \norm{\Grad \bfw\chalf} \norm{\Grad \bfeta\chalf}^2 + C \int_{0}^{T} \norm{\bfeta_t}^2 \;dt  \nonumber \\
&+&   C \Rey \delt\sum_{n=0}^{M-1} \norm{\bfeta\chalf} \norm{\Grad \bfeta\chalf} \norm{\Grad \bfw\chalf}^2 + C \Rey \delt^4  \int_{0}^{T} \norm{\bfw_{tt}}^4 \; dt  \nonumber \\
&+&   \Rey C C_S^2 \delta^{2r} \delt\sum_{n=0}^{M-1} \norm{ \Grad \bfeta\chalf}^2  +  C \Rey \delt^{4} \int_{0}^{T} \left\|\bfw_{ttt} \right\|^{2} dt \nonumber   \\
&+&  C \Rey^{-1} \delt^{4} \int_{0}^{T} \left\|\Grad \bfw_{tt}\right\|^{2} dt +C \Rey C_S^2 \delta^{2r} \delt^4 \int_{0}^{T}  \norm{ \Grad \bfw_{tt} }^2 \; dt \nonumber \\
&+&  C \Rey {\delt^5} \sum_{n=0}^{M-1} \left( \norm{ \Grad \bfw\chalf }^4 + \norm{ \bfw\chalf}^4 \right).
\eea

We continue to estimate the following terms.
\bea 
C \int_{0}^{T} \norm{\bfeta_t}^2 \;dt  \leq C h^{2k+2} \int_{0}^{T} \norm{\bfw_t}_{k+1,2}^2 \;dt, 
\eea
\bea
 \delt \sum_{n=0}^{M-1} C \Rey \inf\limits_{q_{h} \in Q_{h}} \norm{P - q_h }^{2}  \leq   C \Rey  h^{2k} \delt \sum_{n=0}^{M}\norm{P}_{k,2}^{2} \leq  C \Rey  h^{2k} \trinorm{P}_{2,k}^{2} ,
\eea
\bea
    & & \delt \sum_{n=0}^{M-1}  C \Rey \norm{\Grad \bfeta\chalf} \norm{\bfeta\chalf}  \norm{\Grad \bfw\chalf}^2 \nonumber\\
    & \leq & C \Rey h^{2k+1}  \delt \sum_{n=0}^{M-1} \bigg( \norm{\bfw^{n+1} }_{k+1,2}^2\nonumber \\
    & & + \norm{\bfw^{n+1} }_{k+1,2} \norm{\bfw^{n} }_{k+1,2} + \norm{\bfw^{n+1} }_{k+1,2}^2 \bigg) \norm{ \Grad \bfw\chalf }^2  \nonumber\\
    & \leq & C \Rey h^{2k+1} \delt \pare{\sum_{n=0}^{M}||\bfw^{n}||^4_{k+1,2}+\sum_{n=0}^{M}||\nabla \bfw^{n}||^4},
\eea

\bea
    & & \delt \sum_{n=0}^{M-1}  C \Rey  \norm{\Grad \bfeta\chalf}^2 \norm{ \bfw\chalf }\norm{\Grad \bfw\chalf} \nonumber\\
    & \leq & C \Rey \delt \sum_{n=0}^{M-1} \pare{\norm{\Grad \bfeta^{n+1} }^2 + \norm{\Grad \bfeta^n}^2} \norm{\bfw\chalf} \norm{\Grad \bfw\chalf} \nonumber\\
    & \leq & C \Rey h^{2k+1} \delt \pare{\sum_{n=0}^{M}||\bfw^{n}||^4_{k+1,2}+\sum_{n=0}^{M}||\nabla \bfw^{n}||^4 +\sum_{n=0}^{M}|| \bfw^{n}||^4 } \nonumber,\\
    & \leq & C \Rey h^{2k+1}  \pare{ \trinorm{\bfw}^4_{4,k+1}+\trinorm{\nabla \bfw}^4_{4,0} +\trinorm{ \bfw}^4_{4,0}},
\eea
and
\bea
   \Rey C (C_S \delta)^{2r} \delt \sum_{n=0}^{M-1}  \norm{ \Grad \bfeta\chalf}^2 &\leq& 
    \Rey C (C_S \delta)^{2r} h^{2k}  \trinorm{ \bfw }_{2,k+1}^2.
\eea

Putting all the bounds together we obtain
\bea
& & \left\| \bfphi_h^M \right\|^2 + \Rey^{-1}\delt \sum_{n=0}^{M-1} \left\|\Grad \bfphi\chalf_h\right\|^{2}
+  C (C_S \delta)^r \delt\sum_{n=0}^{M-1} \norm{\Grad \bfphi_h\chalf}_{s+2}^{s+2} \nonumber \\
&\leq&  C \delt\sum_{n=0}^{M-1} \pare{\norm{\Grad \bfw\chalf}_{\infty}+1} \norm{\bfphi\chalf_h}^2   \nonumber\\
&+& C \Rey h^{2k+1} \pare{ \trinorm{ \bfw }_{{4,k+1}}^4 + \trinorm{ \Grad \bfw }_{{4,0}}^4 } + C \Rey h^{2k} \trinorm{P}_{2,k}^{2}  \nonumber \\
&+&  C h^{2k+2} \int_{0}^{T} \norm{\bfw_t}_{k+1,2}^2 \;dt + C \Rey h^{2k+1}  \pare{\trinorm{\bfw^{n}}^4_{4,k+1}+\trinorm{\nabla \bfw^{n}}^4_{4,0} + \trinorm{ \bfw^{n} }^4_{4,0} }  \nonumber \\
&+&  Ch^{2k}  \Rey (C_S \delta)^{2r}   \trinorm{ \bfw^{n} }_{k+1,2}^2  +  C \Rey \delt^{4} \int_{0}^{T} \left\|\bfw_{ttt} \right\|^{2} dt \nonumber   \\
&+&  C \Rey^{-1} \delt^{4} \int_{0}^{T} \left\|\Grad \bfw_{tt}\right\|^{2} dt + C \Rey {\delt^4} \left( \trinorm{ \Grad \bfw_{1/2} }_{4,0}^4 + \trinorm{ \bfw_{1/2}}_{4,0}^4 \right) \nonumber \\
&+& C \Rey \delt^4  \int_{0}^{T} \norm{\bfw_{tt}}^4 \; dt + C \Rey (C_S \delta)^{2r} \delt^4 \int_{0}^{T}  \norm{ \Grad \bfw_{tt} }^2 \; dt.
\eea

Thus, with $ \delt $ sufficiently small ($ \gamma_n \delt := C (\norm{\Grad \bfw^n }_{\infty}+1) \delt < 1 $) from Gronwall's Inequality (Lemma \ref{discreteGronwall})  we have the following

\bea
& & \left\| \bfphi_h^M \right\|^2 + \Rey^{-1}\delt \sum_{n=0}^{M-1} \left\|\Grad \bfphi\chalf_h\right\|^{2}
+  C (C_S \delta)^r \delt\sum_{n=0}^{M-1} \norm{\Grad \bfphi_h\chalf}_{s+2}^{s+2} \nonumber \\
&\leq& C \Rey h^{2k+1} \pare{ \trinorm{ \bfw }_{{4,k+1}}^4 + \trinorm{ \Grad \bfw }_{{4,0}}^4 } + C \Rey h^{2k} \trinorm{P}_{2,k}^{2}  \nonumber \\
&+&  C h^{2k+2} \int_{0}^{T} \norm{\bfw_t}_{2,k+1}^2 \;dt + C \Rey h^{2k+1}  \pare{\trinorm{\bfw^{n}}^4_{4,k+1}+\trinorm{\nabla \bfw^{n}}^4_{4,0} + \trinorm{ \bfw^{n} }^4_{4,0} }  \nonumber \\
&+&  Ch^{2k}  \Rey (C_S \delta)^{2r}   \trinorm{ \bfw^{n} }_{2,k+1}^2  +  C \Rey \delt^{4} \int_{0}^{T} \left\|\bfw_{ttt} \right\|^{2} dt \nonumber   \\
&+&  C \Rey^{-1} \delt^{4} \int_{0}^{T} \left\|\Grad \bfw_{tt}\right\|^{2} dt + C \Rey {\delt^4} \left( \trinorm{ \Grad \bfw_{1/2} }_{4,0}^4 + \trinorm{ \bfw_{1/2}}_{4,0}^4 \right) \nonumber \\
&+& C \Rey \delt^4  \int_{0}^{T} \norm{\bfw_{tt}}^4 \; dt + C \Rey (C_S \delta)^{2r} \delt^4 \int_{0}^{T}  \norm{ \Grad \bfw_{tt} }^2 \; dt.
\eea
\end{proof}

Combining the estimates from Lemma \ref{NS proof lemma} and Theorem \ref{LM EMAC Error} we have the following corollary 
\begin{Corollary}
\label{corollary result}
Let $ \bfu $ and $ \bfw$ be smooth enough solution and under the assumptions of Lemma \ref{NS proof lemma} and Theorem \ref{LM EMAC Error} we have

\beas
& & \trinorm{ \bfu - \bfw_h }_{\infty,0}^2 
 =   O(  h^{2k} + \delta^{2r} h^{2k}  + \delt^4 + \delta^{2r} \delt^4  + \delta^{2r} ).
\eeas

\end{Corollary}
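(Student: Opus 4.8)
The plan is to obtain the estimate by a triangle-inequality decomposition that cleanly separates the \emph{modeling} error from the \emph{discretization} error. Writing
\[
\bfu - \bfw_h = (\bfu - \bfw) + (\bfw - \bfw_h),
\]
where $\bfw$ denotes the continuous LM solution of \eqref{varLM}, I would apply the elementary bound $\norm{\bfu - \bfw_h}^2 \le 2\norm{\bfu - \bfw}^2 + 2\norm{\bfw - \bfw_h}^2$ at each discrete time level $t^n$ and then take the maximum over $0 \le n \le M$, giving
\[
\trinorm{\bfu - \bfw_h}_{\infty,0}^2 \le 2\,\trinorm{\bfu - \bfw}_{\infty,0}^2 + 2\,\trinorm{\bfw - \bfw_h}_{\infty,0}^2 .
\]
The two terms on the right are then handled by the two results quoted just above the corollary.

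For the modeling term I would invoke Lemma \ref{NS proof lemma}. Its right-hand side is nondecreasing in $t$, so evaluating at $t=T$ and discarding the nonnegative gradient terms on the left yields $\max_{0\le n\le M}\norm{\bfu(t^n)-\bfw(t^n)}^2 \le C_2\exp\!\pare{C_1\Rey^3\int_0^T\norm{\Grad\bfu}^4\,ds}(C_S\delta)^{2r}\int_0^T\norm{\Grad\bfu}^2\,d\tau$. Under the stated smoothness hypotheses the exponential factor and both integrals are finite and independent of $h$ and $\delt$, and since $(C_S\delta)^{2r}=O(\delta^{2r})$ this contributes precisely the isolated $O(\delta^{2r})$ summand. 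For the discretization term I would apply Theorem \ref{LM EMAC Error} directly: its left-hand side majorizes $\trinorm{\bfw-\bfw_h}_{\infty,0}^2$, and after absorbing the Gronwall factor $CK$ into the hidden constant its right-hand side is $O(h^{2k}+\delta^{2r}h^{2k}+\delt^4+\delta^{2r}\delt^4)$. Summing the two contributions produces exactly the claimed bound.

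There is no substantive difficulty in this argument; essentially all of the work is already carried out in the cited lemma and theorem. The one point requiring care is the passage from the pointwise-in-time continuous estimate of Lemma \ref{NS proof lemma} to the discrete $\trinorm{\cdot}_{\infty,0}$ norm, which is legitimate precisely because that lemma holds for every $t$ and its bound is monotone in $t$; this is what lets the LES closure error enter as the single extra $O(\delta^{2r})$ term. Conceptually this is the price paid for replacing the NSE by the Ladyzhenskaya model, and recalling $\delta=O(h)$ it vanishes together with the mesh- and time-step-dependent terms, confirming convergence of the fully discrete EMAC-LM scheme to the underlying NSE solution.
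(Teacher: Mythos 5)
Your proposal is correct and matches the paper's intent exactly: the corollary is obtained by the triangle inequality splitting $\bfu-\bfw_h$ into the modeling error $\bfu-\bfw$ (bounded by Lemma \ref{NS proof lemma}, giving the isolated $O(\delta^{2r})$ term) and the discretization error $\bfw-\bfw_h$ (bounded by Theorem \ref{LM EMAC Error}). The paper itself offers no further detail beyond ``combining the estimates,'' so your explicit handling of the passage from the continuous-in-time bound to the discrete max norm is, if anything, slightly more careful than the source.
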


\section{Numerical Experiments} \label{sec-numres}

In this section, experimental evaluation is presented for two benchmark problems.

\subsection{Taylor Green Vortex Problem}
A common benchmark problem for obtaining convergence rates of models is the Taylor Green Vortex problem which was first studied in \citep{taylor1923lxxv}. Knowing the true solution will allow investigation of convergence rates. In the unit square $\Omega=(0,1)\times(0,1)$, the true solution of the Taylor Green Vortex problem is 
\begin{gather*}
\begin{aligned}
    u_1(x,y,t) &= - \sin(\omega\pi y)\cos(\omega\pi x)\exp(-2\omega^2\Rey\inv\pi^2t),\\
    u_2(x,y,t) &= \cos(\omega\pi y)\sin(\omega\pi x)\exp{(-2\omega^2\Rey\inv\pi^2t)},\\
    p(x,y,t) &= \frac{- \cos(2\omega\pi y) - \cos(2\omega\pi x)}{4}\exp({-2\omega^2\Rey\inv\pi^2 t}).
\end{aligned}
\end{gather*}

This gives a series of vortices in an  $ \omega \times \omega $ array which decay as $ t \to \infty $. A uniform triangular mesh with $ m $ subdivions along each edge was used. We began with a mesh of $ m=16 $ and further refined by doubling until $ m =  96 $. {Following \cite{ReyesGSM}, $ r= \tfrac{4}{3}+s $ is chosen.} The model parameter $ \omega = 1 $, final time $ T = 0.1$, step size $ \Delta t = 0.0005$, and viscosity $ \Rey=100$  were chosen. Initial condition is $ \bfu_0 = \bfu(0) $, and we implemented Dirichlet boundary conditions. The Smagorinsky constant $ C_S = 0.01 $, and filter width $\delta =\frac{1}{m} $ were used for LM-EMAC simulation. As shown in Table \ref{tab-conv:m1}, LM-EMAC is second-order accurate.

\begin{table}[!ht]\small
	\centering
\begin{tabular}{l|l|l|l|l|l|l}
	\hline
	\hline \multicolumn{7}{c}{$\left\|{\bfu}-{\bfw}_h\right\|_{\ell^2\left(0, T ; H^1\right)}$ error} \\
	\hline $m$ & LM (s = 1) & Rate & LM (s=2) & Rate & LM (s=3) & Rate \\\hline
	 16 & $9.8226 \times 10^{-3}$ & - &         $9.8940 \times 10^{-3}$ & -              & $9.9128 \times 10^{-3}$ & - \\
	 32 & $1.5500\times 10^{-3}$   & 2.66 & $1.5518 \times 10^{-3}$ & 2.67          & $1.5520 \times 10^{-3}$ & 2.68\\
	 48 & $5.5738 \times 10^{-4}$ & 2.52 & $5.5741 \times 10^{-4}$ & 2.53          & $5.5743 \times 10^{-4}$ & 2.53 \\
	 64 & $2.8188 \times 10^{-4}$ & 2.37 & $2.8182 \times 10^{-4}$ & 2.37           & $2.8182 \times 10^{-4}$ & 2.37 \\
	 80 & $1.7006 \times 10^{-4}$ & 2.26 & $1.7001 \times 10^{-4}$ & 2.26            & $1.7002 \times 10^{-4}$ & 2.27 \\
	 96 & $1.1398 \times 10^{-4}$ & 2.19  & $1.1395 \times 10^{-4}$ & 2.19            & $1.1395 \times 10^{-4}$ & 2.19 \\
	\hline\hline
\end{tabular}
	\caption{Numerical results on the errors of  simulated velocity   at the terminal time $T = 0.1$ produced by the LM-EMAC finite element scheme.}
\label{tab-conv:m1}
\end{table}

\subsection{The channel flow past a forward-backward facing step}

We next test  the problem of the  channel flow passing a forward-backward facing step. 
 The domain of the step problem is a $40\times 10$ channel with a $1\times 1$ step five units into the channel at the bottom. 
No body force is imposed, i.e., $\bm f=0$. In addition, the boundary condition is given by no-slip condition on the top and bottom walls and the step, no-flux condition of the outflow in the right side, and a constant-in-time parabolic inflow with max inlet velocity of 1 in the left side, that is
\begin{align*}
	\bm u(0,y,t) = (y(10-y)/25, 0),\quad 0\leq y\leq 10.
\end{align*}
The initial velocity is set to be $\bm u_0=0$,  $\Rey=10^4$, and the terminal time $T=40$. We generate a spatial mesh with 6,010 vertices and 11,598 triangles for the domain, and choose a relatively large time step size $\tau=0.01$.

For this challenging large Reynolds number problem, we set the Smagorinsky constant $ C_S = 0.01 $, and filter width $ \delta = 0.01$ were used for both the LM-EMAC and LM-SKEW simulations. The LM simulation was run with parameter $ s = 1 $ and $ r= \tfrac{4}{3}+s$. 
The plots of the magnitude  of the simulated velocity at times $t$ = 10, 20, 30 and 40 are shown in Figures \ref{step:mag} and \ref{step:mag:skew} for LM-EMAC and LM-SKEW schemes, respectively.  It is well-known that the eddies will form behind the step,  and gradually grow and detach from the step \cite{manica2011enabling} in this problem. Such phenomenon can be clearly observed from the simulation results produced by the proposed LM-EMAC and LM-SKEW schemes. However, LM-EMAC is more stable than LM-SKEW if we check the behavior around the right-hand side boundary. This is due to the momentum and angular momentum-preserving property of EMAC scheme. To demonstrate such guess, we also plot the evolutions of energy, momentum and angular momentum generated by these two schemes as shown in Figure \ref{step:quan:com}. Since we prescribe the parabolic inflow, the energy is not dissipative. Instead, it is increasing along with the time. As illustrated in Figure \ref{step:quan:com}, the energy generated by LM-EMAC is smaller than LM-SKEW after $t=10$. Next, LM-EMAC preserves the momentum pretty well since its plot lies in between 266.6 and 266.8. However, the momentum of LM-SKEW oscillates between 265.4 and 267.2. Moreover, its peak tends to increase. Similar behaviors can be observed from the angular momentum plot. All three plots imply that LM-EMAC is more feasible for a longer time simulation.

\begin{figure}[!htbp]
	\centerline{
		\includegraphics[width=0.44\textwidth]{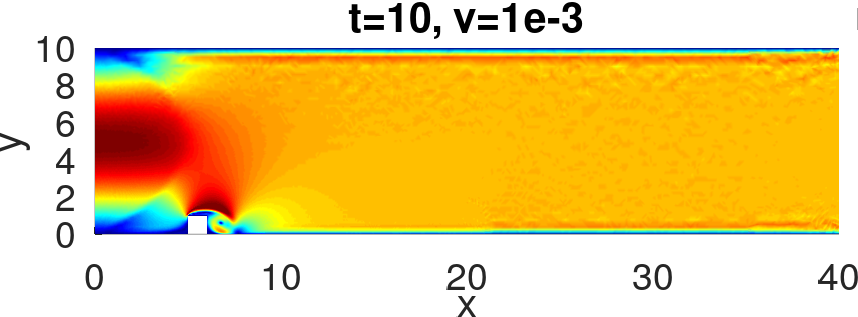}
		\includegraphics[width=0.44\textwidth]{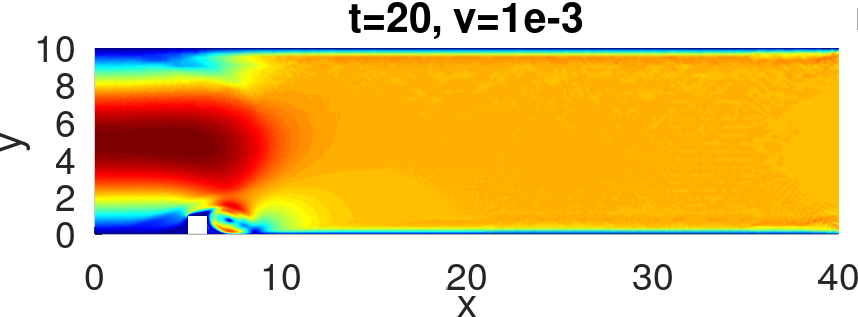}
		\includegraphics[width=0.1\textwidth]{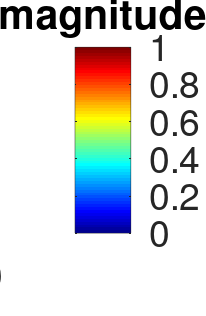}
	}\vspace{0ex}
	\centerline{
		\includegraphics[width=0.44\textwidth]{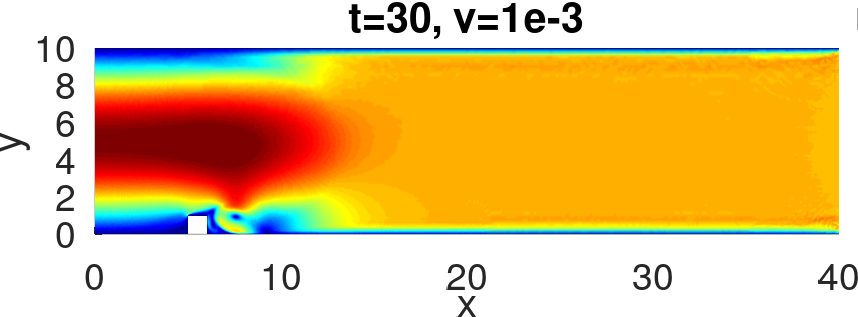}
		\includegraphics[width=0.44\textwidth]{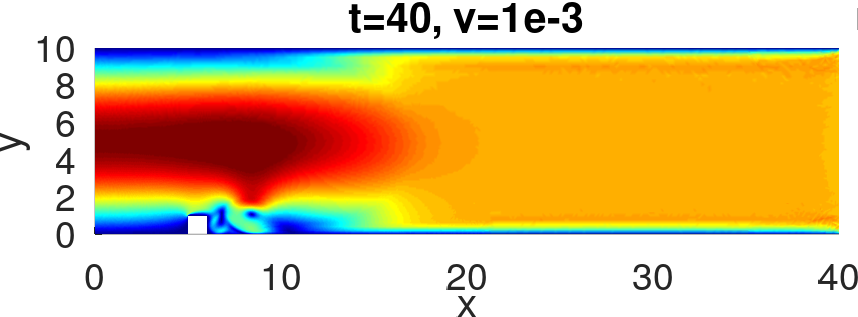}
		\includegraphics[width=0.1\textwidth]{colorbar.png}
	}
	\caption{Plots of magnitude of the simulated velocity at times $t$ = 10, 20, 30 and 40 generated by LM-EMAC scheme  for the channel flow past a forward-backward facing step problem with the viscosity $\Rey=10^4$.}
	\label{step:mag}
\end{figure}

\begin{figure}[!htbp]
	\centerline{
		\includegraphics[width=0.44\textwidth]{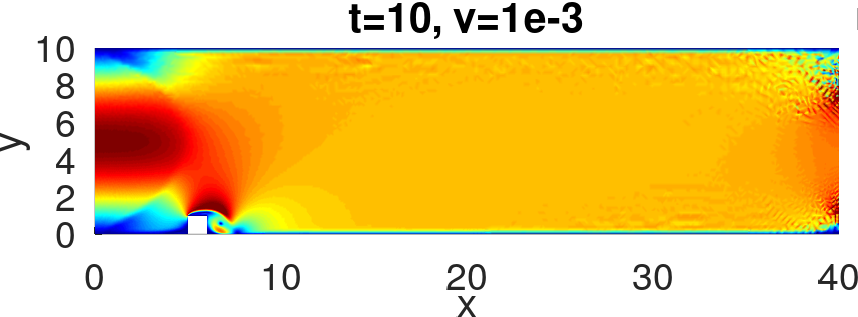}
		\includegraphics[width=0.44\textwidth]{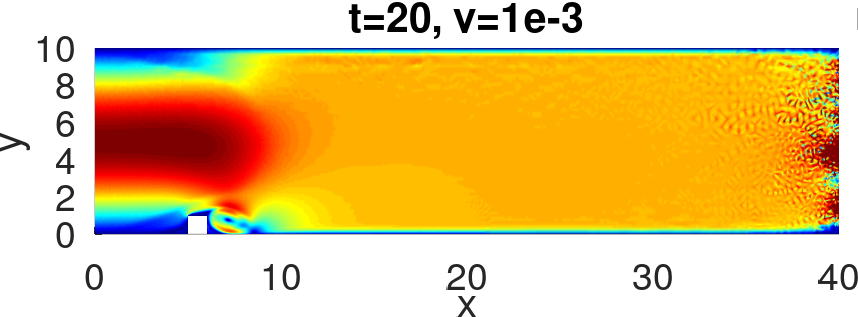}
		\includegraphics[width=0.1\textwidth]{colorbar.png}
	}\vspace{0ex}
	\centerline{
		\includegraphics[width=0.44\textwidth]{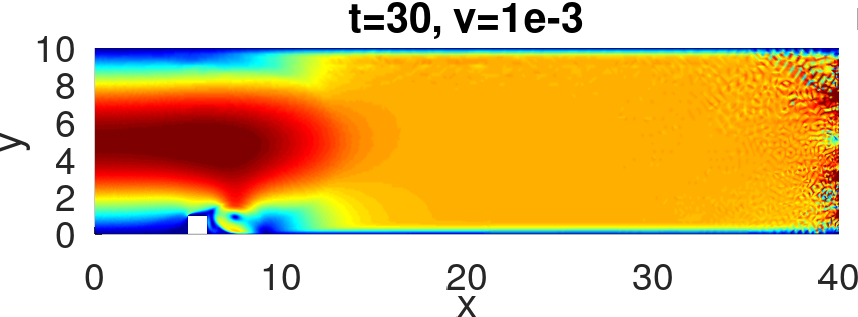}
		\includegraphics[width=0.44\textwidth]{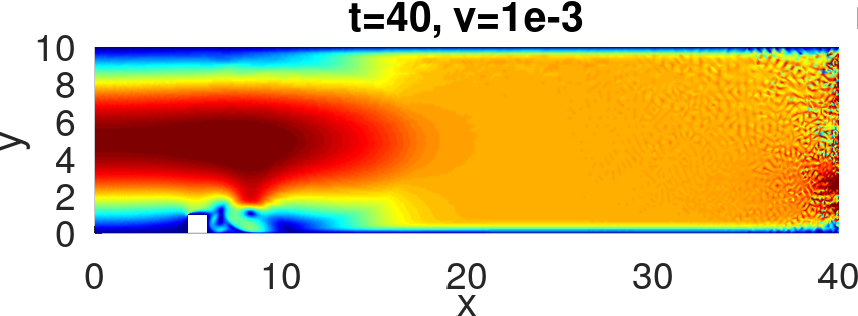}
		\includegraphics[width=0.1\textwidth]{colorbar.png}
	}
	\caption{Plots of magnitude of the simulated velocity at times $t$ = 10, 20, 30 and 40 generated by  LM-SKEW  for the channel flow past a forward-backward facing step problem with the viscosity $\Rey = 10^4$.}
	\label{step:mag:skew}
\end{figure}

\begin{figure}[!ht]
	\centerline{
		{\includegraphics[width=0.37\textwidth]{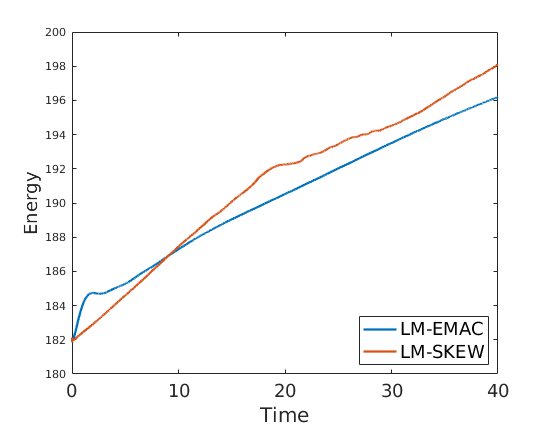}}\hspace{-2ex}
		{\includegraphics[width=0.37\textwidth]{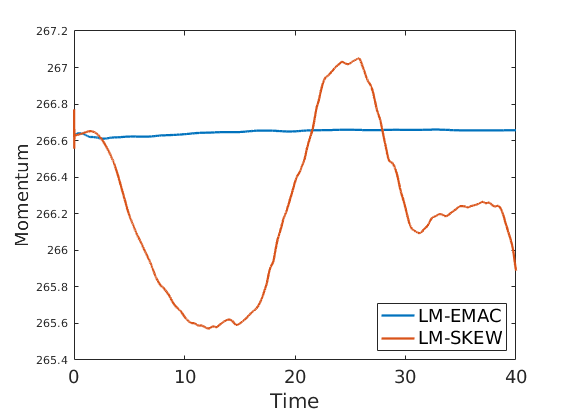}}\hspace{-2ex}
		{\includegraphics[width=0.37\textwidth]{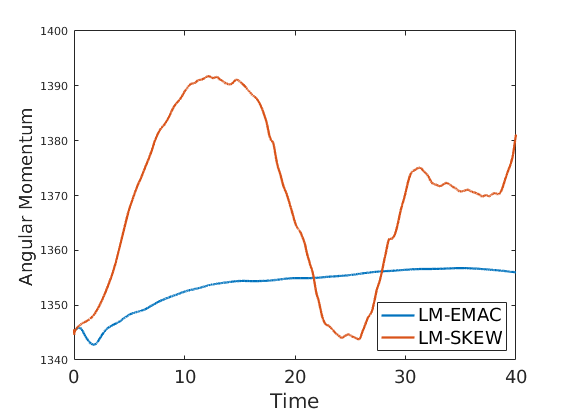}}
	}
	\vspace{-1ex}
	\caption{Evolutions of  the simulated kinetic energy (left), momentum (middle) and angular momentum (right) of the numerical solution generated by LM-EMAC and LM-SKEW schemes for the step problem with the viscosity $\Rey = 10^4$. The time step size $\delt=0.01$. }
	\label{step:quan:com}
\end{figure}

\section{Conclusions} \label{sec-Con}

We extended the long-time accuracy analysis of EMAC to the context of Large Eddy Simulations, focusing on the Ladyzhenskaya model, which can be viewed as a generalization of the widely used Smagorinsky model. The analysis revealed that the Gronwall constant in EMAC’s error bounds is significantly smaller than in SKEW, as EMAC’s bounds are not explicitly dependent on the Reynolds number. Numerical experiments confirmed the theoretical findings and aligned with results reported in the literature where EMAC consistently outperforms analogous methods based on SKEW particularly in long-time simulations. Future work will focus on developing sharper error estimates for the Ladyzhenskaya–EMAC system and exploring the impact of different stabilization parameters on long-time accuracy. A sensitivity study or parameter investigation for $ \delta^r$, and the power $ s$ could also merit further study.

\section*{Acknowledgments}
R. Lan's research is partially supported by Shandong Provincial Natural Science Fund for Excellent Young Scientists Fund Program (Overseas) under grant number 2023HWYQ-064, National Natural Science Foundation of China under grant number 12301531, the Shandong Provincial Youth Innovation Project under the grant number 2024KJN057 and  the OUC Scientific Research Program for Young Talented Professionals.

\section*{References}
\bibliographystyle{elsarticle-num}
\bibliography{ref}

\end{document}